\documentclass[12pt]{article}

\usepackage{amsfonts,mathrsfs}
\usepackage{amssymb,amsmath,amsbsy,amsthm}
\usepackage{dsfont}
\usepackage{blkarray}
\usepackage{tikz}
\usepackage{tkz-euclide}
\usepackage{tikz-cd}
\usetikzlibrary{patterns}
\usepackage{ae}
\usepackage{bm}
\usepackage{graphicx}
\usepackage{float}
\usepackage{cite}
 \usepackage{indentfirst}
\usepackage{lineno}
\usepackage{titlesec}
\usepackage{enumitem}
\usepackage{color}
\usepackage{aliascnt}
\usepackage{varioref}
\usepackage{hyperref}
\hypersetup{colorlinks=true,linkcolor=cyan,anchorcolor=cyan,citecolor=red,CJKbookmarks=True,
,urlcolor=cyan}
\pdfstringdefDisableCommands{%
    \renewcommand*{\bm}[1]{#1}%
    }
\usepackage{cleveref}

\numberwithin{equation}{section}

\def\NBC{{\rm NBC}}

\def\c{\mbox{\rm c}}

\def\And{\mbox{\rm ~and~}}

\def\I{\mbox{\rm (\hspace{0.2mm}I\hspace{0.2mm})}\,}
\def\II{\mbox{\rm (\hspace{-0.1mm}I\hspace{-0.2mm}I\hspace{-0.1mm})}\,}

\def\cal{\mathcal}
\def\udl{\underline}

\def\sst{\scriptscriptstyle}

\def\red{\color{red}}

\def\({\mbox{\rm (}}\def\){\mbox{\rm )}}

\def\b{\big}

\makeatletter

\newcommand{\Rmnum}[1]{\expandafter\@slowromancap\romannumeral #1@}
\makeatother
\newtheorem{theorem}{Theorem}[section]
\newaliascnt{lemma}{theorem}
\newtheorem{lemma}[lemma]{Lemma}
\aliascntresetthe{lemma}

\newaliascnt{proposition}{theorem}
\newtheorem{proposition}[proposition]{Proposition}
\aliascntresetthe{proposition}

\newaliascnt{fact}{theorem}

\aliascntresetthe{fact}

\newaliascnt{definition}{theorem}

\aliascntresetthe{definition}

\newaliascnt{conjecture}{theorem}

\aliascntresetthe{conjecture}

\newaliascnt{corollary}{theorem}

\aliascntresetthe{corollary}

\newaliascnt{claim}{theorem}

\aliascntresetthe{claim}

\newaliascnt{problem}{theorem}

\aliascntresetthe{problem}

\newaliascnt{remark}{theorem}
\newtheorem{remark}[remark]{Remark}
\aliascntresetthe{remark}

\newaliascnt{example}{theorem}
\newtheorem{example}[example]{Example}
\aliascntresetthe{example}

\linespread{1.0}
\setlength{\parindent}{2em}
\setlength{\textheight}{240mm}

\addtolength{\hoffset}{-15mm}
\addtolength{\voffset}{-20mm}
\addtolength{\textwidth}{30mm}
\begin{document}
\begin{center}
{\Large\bf Two Bijections on NBC Subsets}\\[7pt]
\end{center}
\vskip3mm

\begin{center}
Houshan Fu$^{1}$, Baoren Peng$^{2}$, Suijie Wang$^{3}$ and Jinxing Yang$^{4}$\\[8pt]

 $^{1}$School of Mathematics and Information Science\\
 Guangzhou University\\
Guangzhou 510006, Guangdong, P. R. China\\[12pt]

 $^{2,3,4}$School of Mathematics\\
 Hunan University\\
 Changsha 410082, Hunan, P. R. China\\[15pt]

$^{*}$Correspondence to be sent to:jxyangmath@hnu.edu.cn \\
Emails: $^{1}$fuhoushan@gzhu.edu.cn, $^{2}$pengbaoren@hnu.edu.cn, $^{3}$wangsuijie@hnu.edu.cn\\[15pt]
\end{center}

\vskip 3mm
\begin{abstract}
We establish two explicit bijections: from acyclic reorientations of an oriented matroid to no broken circuit (NBC) subsets of its underlying matroid, and from regions of a real hyperplane arrangement to its affine NBC subsets.
\vspace{1ex}\\
\noindent{\bf Keywords:} Oriented matroid, hyperplane arrangement, broken circuit, acyclic orientation\vspace{1ex}\\
{\bf Mathematics Subject Classifications:} 05B35, 52C35
\end{abstract}

\section{Introduction}\label{sec1}
In order to solve the long-standing four colour problem, Birkhoff  \cite{Birkhoff1912} first introduced the chromatic polynomial of planar graphs  in 1912. Twenty years later, Whitney \cite{Whitney1932} defined the chromatic polynomial for finite graphs and gave a combinatorial interpretation for the coefficients of the chromatic polynomial. This interpretation is called  Whitney's broken circuit theorem. In 1973, applying Whitney's broken circuit theorem,  Stanley \cite[Proposition 1.1]{Stanley1973} obtained that the number of acyclic orientations of a finite graph $G$ equals the number of no broken circuits of the edge set $E(G)$, called Stanley's theorem.

As an extension of oriented graphs,  in 1978 Bland and Las Vergnas \cite{Bland-Lasvergnas1978} introduced the concept of oriented matroids.
Let $\mathcal{M}=\b(E,\mathcal{C}\b)$ be an oriented matroid on the ground set $E$.  For $A\subseteq E$, denote by ${{}_{-A}}\mathcal{M}=\b(E,{{}_{-A}}\mathcal{C}\b)$ the {\em reorientation} of  $\mathcal{M}$ by reversing signs on $A$.  By a slight abuse of language, we will usually say that $A\subseteq E$ is an {\em acyclic reorientation} of $\cal{M}$ if  ${}_{-A}\mathcal{M}$ is acyclic (i.e., containing no positive circuits).  Denote by $A(\mathcal{M})$ the collection of subsets $A$ of $E$ such that $A$ is an acyclic reorientation of $\cal{M}$. Suppose the underlying matroid of $\cal{M}$ is $\udl{\cal{M}}=(E,\udl{\cal{C}})$ and the ground set $E$ is totally ordered by $\prec$. A {\em broken circuit} of $\udl{\cal{M}}$ is a subset of $E$ obtained from a member of $\udl{\cal{C}}$ by deleting its maximal element with respect to $\prec$.
A subset $A\subseteq E$ is an {\em NBC subset} of $\udl{\cal{M}}$ if $A$ contains no broken circuits of $\udl{\cal{M}}$. Denote by ${\rm NBC}(\udl{\cal{M}})$ the collection of NBC subsets of $\udl{\cal{M}}$. As a generalization of the chromatic polynomial, in 1964 Rota \cite{Rota1964} introduced the characteristic polynomial for matroids (or combinatorial geometries) and obtained a matroid version of Whitney's broken circuit theorem, see \cite{Bjorner1991,Brylawski-Oxley1981,Jambu-Terao1986,Sagan1995,Stanley2007} for details. In 1980, Stanley's theorem  was extended to oriented matroids by Las Vergnas \cite{LasVergnas1980} .

\begin{theorem}[\cite{LasVergnas1980}, Theorem 3.1]\label{Acyclic-NBC}
If $\mathcal{M}$ is an oriented matroid and $\underline{\mathcal{M}}$ its underlying matroid, then \[
|A(\mathcal{M})|=|\NBC(\underline{\mathcal{M}})|.
\]
\end{theorem}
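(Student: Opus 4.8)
The plan is to prove the identity by showing that both sides equal the same evaluation of the characteristic polynomial, and then (in the spirit of the paper's title) to upgrade this to an explicit bijection. First I would recall the matroid broken-circuit theorem: with $T(\udl{\cal{M}};x,y)$ the Tutte polynomial and $\chi(\udl{\cal{M}};t)$ the characteristic polynomial, one has $\sum_{A\in\NBC(\udl{\cal{M}})} t^{\rank(E)-|A|}=(-1)^{\rank(E)}\chi(\udl{\cal{M}};t)$ up to sign conventions, and in particular $|\NBC(\udl{\cal{M}})|=(-1)^{\rank(E)}\chi(\udl{\cal{M}};-1)$ when we set $t=-1$; equivalently $|\NBC(\udl{\cal{M}})|=T(\udl{\cal{M}};2,0)$. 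On the other side, the number of acyclic reorientations of an oriented matroid is a well-known evaluation of the Tutte polynomial of the underlying matroid, $|A(\cal{M})|=T(\udl{\cal{M}};2,0)$ (the oriented-matroid generalization of Stanley's count of acyclic orientations, due to Las Vergnas and to Zaslavsky). Comparing the two evaluations gives the theorem immediately.

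For a self-contained argument I would instead run parallel deletion--contraction recursions on an element $e\in E$ that is neither a loop nor a coloop, arranging $e$ to be the $\prec$-maximal element. On the NBC side, the standard recursion $\NBC(\udl{\cal{M}})\leftrightarrow \NBC(\udl{\cal{M}}\setminus e)\sqcup\NBC(\udl{\cal{M}}/e)$ holds precisely because $e$ is maximal: an NBC subset either omits $e$ (giving an NBC subset of the deletion) or contains $e$, in which case deleting $e$ yields an NBC subset of the contraction, the key point being that a broken circuit of $\udl{\cal{M}}/e$ together with $e$ cannot be a broken circuit of $\udl{\cal{M}}$ since $e$ is the largest element. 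On the acyclic-reorientation side, one uses the fact that reorientations of $\cal{M}$ restrict and project to reorientations of $\cal{M}\setminus e$ and $\cal{M}/e$, and that for $e$ not a loop or coloop the fibers of the forgetful map $A(\cal{M})\to$ (reorientations of $\cal{M}\setminus e$) split $A(\cal{M})$ into a part biject­ing with $A(\cal{M}\setminus e)$ and a part bijecting with $A(\cal{M}/e)$; this is where one invokes the three-painting / covector axioms of oriented matroids to control how reversing the sign on $e$ creates or destroys positive circuits. The base cases are $E$ consisting only of loops and coloops, where both counts are $1$ if there is no loop and $0$ if there is a loop.

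The main obstacle is the acyclic-reorientation recursion: establishing that deleting $e$ partitions $A(\cal{M})$ cleanly into two blocks of the right sizes requires a careful case analysis of how a positive circuit through $e$ behaves under contraction, and dually how acyclicity of ${}_{-A}(\cal{M}\setminus e)$ plus a sign condition on $e$ is equivalent to acyclicity of ${}_{-A}\cal{M}$ or ${}_{-(A\cup\{e\})}\cal{M}$. For the \emph{bijective} strengthening promised by the paper, I would make both recursions constructive: track through the deletion--contraction step an explicit correspondence, so that composing the recursive bijections yields a concrete map $A(\cal{M})\to\NBC(\udl{\cal{M}})$; verifying that this recursively-defined map is independent of the auxiliary choices (it need not be, but a canonical choice such as always picking the $\prec$-maximal non-coloop non-loop should make it well defined) is the delicate bookkeeping step. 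I expect the cleanest exposition to fix the order $\prec$ once and for all, induct on $|E|$, and phrase the oriented-matroid side in terms of signed circuits meeting $e$, so that the sign reversal on $e$ toggles a single coordinate and the combinatorics of "which reorientations become acyclic" matches the combinatorics of "which subsets avoid the broken circuit created by $e$."
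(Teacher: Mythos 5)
Your proposal is a legitimate route to the counting identity, but it is genuinely different from what the paper does, and as written it has real gaps. The paper does not use deletion--contraction recursion at all: it extends the Blass--Sagan algorithm, processing the elements $e_1\prec\cdots\prec e_m$ one at a time via explicit maps $\psi_k$ on pairs $(N_k,A_k)$ consisting of a partial NBC subset $N_k\subseteq E_k$ and a reorientation datum $A_k\subseteq E-E_k$ for which ${}_{-A_k}\b(\cal{M}\backslash N_k^c/N_k\b)$ is acyclic; each $\psi_k$ is given in closed form and is shown to be well defined, injective and surjective, the only nontrivial oriented-matroid input being the lemma that if $e$ is not a loop and $\cal{M}/\{e\}$ is acyclic then $\cal{M}$ is acyclic. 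The resulting bijection is canonical once $\prec$ is fixed, which is exactly the feature your recursive scheme would have to fight for (your ``independent of the auxiliary choices'' caveat); this closed-form, non-recursive character is the stated point of the paper, in contrast to the recursive bijections of Jewell--Orlik, Delucchi, and Gioan--Las Vergnas that your sketch resembles.

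On the gaps. Your first paragraph is essentially circular: the identity $|A(\cal{M})|=T(\udl{\cal{M}};2,0)$ \emph{is} Las Vergnas's Theorem 3.1, i.e.\ the statement being proved (modulo the standard broken-circuit theorem), so invoking it as a known evaluation proves nothing. Your second route is the classical one, but the step you yourself flag as ``the main obstacle''---that for $e$ neither a loop nor a coloop the set $A(\cal{M})$ splits into a block bijective with $A(\cal{M}\backslash e)$ and a block bijective with $A(\cal{M}/e)$---is precisely the whole content of the induction step and is left unproved; carrying it out requires the elimination axiom $(\cal{C}4)$ in essentially the way the paper's \autoref{acyclic-contraction} and Case~2 of \autoref{lemma10} use it. Finally, your base case is wrong: if $E$ consists of $c$ coloops and no loops, both counts are $2^c$, not $1$ (there are no circuits, so every reorientation is acyclic and every subset is NBC). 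The equality still holds there, so the induction is not derailed, but the claim as stated is false.
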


A {\em hyperplane arrangement} $\mathcal{A}$  is a finite collection of (affine) hyperplanes in an $n$-dimensional vector space $V$.
In particular, $\mathcal{A}$ is called a  {\em linear arrangement} if all hyperplanes in $\mathcal{A}$ pass through the origin. Given a total ordering $\prec$ on $\mathcal{A}$, an {\em affine circuit} is a smallest subset $\mathcal{B}$ of $\mathcal{A}$ such that $\cap\mathcal{B}:=\bigcap_{H\in \mathcal{B}}H\ne \emptyset$ and $\dim(\cap\mathcal{B})+|\mathcal{B}|=n+1$. An {\em affine broken circuit} is a subset of $\mathcal{A}$ obtained from an affine circuit by removing its maximal element with respect to  $\prec$. A subset $\mathcal{B}$ of $\mathcal{A}$ is called an {\em affine NBC subset} if $\mathcal{B}$ contains no affine broken circuits and $\cap\mathcal{B}\neq \emptyset$. Let $\NBC(\mathcal{A})$ be the collection of all affine NBC subsets of $\mathcal{A}$. When $\mathcal{A}$ is a linear arrangement, `affine circuit',  `affine broken circuit' and `affine NBC subset' are referred to as `circuit', `broken circuit' and `NBC subset' respectively. In 1992, Orlik-Terao \cite{Orlik1992} extended Whitney's broken circuit theorem to  hyperplane arrangements, that is, for $k=0,1,\ldots,n$, the absolute value of the coefficient of $t^{n-k}$ in the characteristic polynomial $\chi(\mathcal{A},t)$ of $\mathcal{A}$ equals the number of all affine NBC $k$-subsets of $\mathcal{A}$. When $V$ is a real vector space, the complement $V-\bigcup_{H\in \mathcal{A}}H$ consists of finitely many connected components, called the {\em regions} of $\mathcal{A}$. Let $\mathcal{R}(\mathcal{A})$ be the collection of all regions of $\mathcal{A}$. In 1975, Zaslavsky \cite{Zaslavsky1975} showed that the number of all regions of $\mathcal{A}$ is exactly equal to $|\chi(\mathcal{A},-1)|$. Below is an easy consequence of Zaslavsky's formula and Whitney's broken circuit theorem for  hyperplane arrangements, which is a version of Stanley's theorem for real hyperplane arrangements.
\begin{theorem}[ \cite{Zaslavsky1975,Orlik1992}]\label{Region-NBC}If $\mathcal{A}$ is a hyperplane arrangement in a real vector space, then
\begin{equation*}
|\mathcal{R}(\mathcal{A})|=|\NBC(\mathcal{A})|.
\end{equation*}
\end{theorem}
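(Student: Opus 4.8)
The plan is to deduce Theorem~\ref{Region-NBC} from the two classical ingredients cited in its statement, namely Zaslavsky's formula and Whitney's broken circuit theorem for hyperplane arrangements, without constructing any bijection directly (the bijective proof being the main content of the paper, but here we only need the enumerative identity).

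\medskip

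\textbf{Step 1: Invoke the broken circuit theorem to express $|\NBC(\mathcal{A})|$ via $\chi(\mathcal{A},t)$.} By the Orlik--Terao extension of Whitney's broken circuit theorem recalled in the introduction, for each $k=0,1,\ldots,n$ the number of affine NBC $k$-subsets of $\mathcal{A}$ equals the absolute value of the coefficient of $t^{n-k}$ in $\chi(\mathcal{A},t)$. Writing $\chi(\mathcal{A},t)=\sum_{k=0}^{n}(-1)^{k}w_{k}\,t^{n-k}$ with $w_k\ge 0$ (the signs of the coefficients of a characteristic polynomial alternate, again by the broken circuit theorem), we get $|\NBC(\mathcal{A})|=\sum_{k=0}^{n}w_{k}$.

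\medskip

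\textbf{Step 2: Evaluate $\chi(\mathcal{A},-1)$ and apply Zaslavsky's formula.} Substituting $t=-1$ into the expression above gives $\chi(\mathcal{A},-1)=\sum_{k=0}^{n}(-1)^{k}w_{k}(-1)^{n-k}=(-1)^{n}\sum_{k=0}^{n}w_{k}$, so that $|\chi(\mathcal{A},-1)|=\sum_{k=0}^{n}w_{k}=|\NBC(\mathcal{A})|$. By Zaslavsky's theorem, $|\mathcal{R}(\mathcal{A})|=|\chi(\mathcal{A},-1)|$ for a real arrangement, and combining the two equalities yields $|\mathcal{R}(\mathcal{A})|=|\NBC(\mathcal{A})|$.

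\medskip

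\textbf{Anticipated obstacle.} There is essentially no obstacle at the level of this theorem: it is a formal consequence of two results the paper explicitly attributes to Zaslavsky and to Orlik--Terao, and the only point requiring a word of care is the sign bookkeeping in Step~1 — one must know that the coefficients of $\chi(\mathcal{A},t)$ strictly alternate in sign (equivalently, that the unsigned coefficients $w_k$ are nonnegative), which is part of the broken circuit theorem since each $w_k$ counts a set of subsets. Once that is in hand, the evaluation at $t=-1$ collapses the alternating signs into a single global sign $(-1)^n$, and taking absolute values finishes the argument. The substantive work of the paper — producing an \emph{explicit} bijection realizing this equality — is a separate matter and is not needed for the statement as phrased.
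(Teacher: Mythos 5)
Your derivation is correct, and it is precisely the argument the paper itself sketches when it introduces \autoref{Region-NBC} as ``an easy consequence of Zaslavsky's formula and Whitney's broken circuit theorem for hyperplane arrangements'': writing $\chi(\mathcal{A},t)=\sum_{k=0}^{n}(-1)^{k}w_{k}t^{n-k}$ with $w_k$ the number of affine NBC $k$-subsets, evaluating at $t=-1$ gives $|\chi(\mathcal{A},-1)|=\sum_k w_k=|\NBC(\mathcal{A})|$, and Zaslavsky's formula supplies $|\mathcal{R}(\mathcal{A})|=|\chi(\mathcal{A},-1)|$; your sign bookkeeping is the only delicate point and you handle it correctly, since each $w_k$ is a cardinality and hence nonnegative. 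However, the proof the paper actually carries out for this statement is different in kind: \autoref{sec3} constructs an explicit bijection $\phi=\phi_m\circ\cdots\circ\phi_1$ from $\mathscr{B}_0\cong\mathcal{R}(\mathcal{A})$ to $\mathscr{B}_m\cong\NBC(\mathcal{A})$ for linear arrangements (each step $\phi_k$ either absorbs the hyperplane $H_{e_k}$ into the NBC set or discards it, tracked against the region), verified by identifying $\phi_k$ with the oriented-matroid bijection $\psi_k$ of \autoref{Sec2.2}, and then transported to affine arrangements by coning. Your route buys brevity and requires no new construction, but it is purely enumerative and rests on two nontrivial external theorems; the paper's route is constructive, exhibits which region corresponds to which affine NBC subset, and yields as a byproduct the refined identity of \autoref{Regions} (a sum over intermediate stages $\mathscr{B}_k$) that the cancellation argument cannot see. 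Both are valid proofs of the equality as stated.
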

Bijective proofs for \autoref{Acyclic-NBC} and \autoref{Region-NBC} have been well studied over the past 25 years. By this time there were many important bijections constructed inductively by the deletion-contraction operation of hyperplane arrangements or oriented matroids as well. For example, Jewell-Orlik \cite{Jewell-Orlik2002} established the bijection from $\mathcal{R}(\mathcal{A})$ to $\NBC(\mathcal{A})$, which was reworked by Delucchi  \cite{Delucchi2008} in classical settings of hyperplane arrangement complexes. In a series of papers \cite{Gioan2002,Gioan-Vergnas2006,Gioan-Vergnas2007,Gioan-Vergnas2009, Gioan-Vergnas2018,Gioan-Vergnas2019}, Gioan and Las Vergnas made a systematic study on a framework to provide bijections for acyclic orientations of graphs and matroids, regions of hyperplane arrangements, no broken circuits, etc.. The first chapter of Gioan's thesis \cite{Gioan2002} provides a deletion-contraction framework for oriented matroids whose specification are available for hyperplane arrangements. This framework is detailed precisely in \cite{Gioan-Vergnas2019} for graphs. All these are developed in the celebrated ``active bijection" theory, initially defined in \cite{Gioan2002} and detailed in \cite{Gioan-Vergnas2018}. Indeed, the refined active bijection produces bijections from acyclic reorientations or regions to no broken circuits.  Furthermore,  the canonical active bijection available for oriented matroids has the advantage of preserving Tutte polynomial activities and being substantially independent of the initial orientation.

Note that the bijection in \cite{Jewell-Orlik2002, Delucchi2008} are defined by the deletion-contraction operation, which means the correspondence are not characterized specifically, but built step by step in a recursive way. A bijection for graphs by Blass and Sagan \cite{Blass-Sagan1986} in 1986 is particularly noteworthy. In the Blass-Sagan's algorithm, the correspondence of each step is stated in a closed form and its domain (or its range as well) is characterized explicitly. In this paper, the Blass-Sagan's bijection available for graphs will be extended to oriented matroids and real hyperplane arrangements, which gives bijective proofs for \autoref{Acyclic-NBC} and \autoref{Region-NBC} respectively. As a straightforward generalization, our bijection will be established more explicitly by introducing the pairings of NBC subsets and acyclic reorientation sets of oriented matroids (or regions of real hyperplane arrangements), see \autoref{Sec2.2} and \autoref{sec3} for details. 

The remainder of this paper is organized as follows. \autoref{sec4} is devoted to establishing a bijection between all acyclic reorientations of an oriented matroid and its NBC subsets. Applying this bijection to real hyperplane arrangements,  \autoref{sec3} gives a bijection from all regions of a real hyperplane arrangement to its affine NBC subsets.
\section{Bijection from acyclic reorientations to NBC subsets}\label{sec4}
This section aims to establish an explicit bijection from acyclic reorientations of an oriented matroid to no broken circuit (NBC) subsets of its underlying matroid.
\subsection{Basics on oriented matroids}
We start with some basics on matroids and oriented matroids following from \cite{Oxley2011,Bjorner1999}. In 1935, Whitney \cite{Whitney1935} introduced matroids to capture abstract properties of vector dependences and  graph cycles. A {\em matroid} $M=(E, C)$ is an ordered pair of a finite set $E$ and  a collection $C$ of subsets of
$E$ satisfying the following three properties:
\begin{itemize}
  \item [{(C1)}] $\emptyset\notin C$;
  \item [{(C2)}] if $X,Y\in C$ and $X\subseteq Y$, then $X=Y$;
  \item [{(C3)}] if $X,Y$ are distinct members of $C$ and $e\in X\cap Y$, then there is a member $Z\in C$ such that $Z\subseteq X\cup Y-\{e\}$.
\end{itemize}
Members of $ C$ are called circuits of $ M$ and $E$ is called the ground set of $ M$. In particular, a single element of $E$ is a loop if itself is a circuit. A matroid is said to be loopless if it contains no loops. A subset of $E$ is independent if it contains no circuits of $ M$ and dependent otherwise. Additionally, a subset $X$ of $E$ is called a flat of $ M$ if for any element $e\in E-X$, there is no circuit $Y$ such that $e\in Y\subseteq X\cup\{e\}$.  Let $ M=(E, C)$ be a matroid on the ground set $E$ and $X$ a subset of $E$. The deletion $ M\backslash X=(E-X, C\backslash X)$ is a matroid on the ground set $E-X$ with the circuit set
\[
 C\backslash X:=\{Y\mid Y\in C\And Y\subseteq E-X\}.
\]
The contraction $ M/X=(E-X, C/X)$ is a matroid on the ground set $E-X$ with the circuit set
\[
 C/X:={\rm min}\{Y-X\mid Y\in C\And Y-X\ne\emptyset\},
\]
where `${\rm min}$' means the inclusion-minimal non-empty sets. In addition, a minor of $ M$ is a matroid obtained from $ M$ by a sequence of deletions and contractions.

To equip matroids with orientations, in 1978 Bland and Las Vergnas \cite{Bland-Lasvergnas1978} introduced the concept of oriented matroids.  For any two disjoint subsets $X^+$  and $X^-$ of a finite set $E$, the paring $X=(X^+,X^-)$ is said to be a signed subset of $E$ and $X^+$ (resp. $X^-$) is the set  of positive (resp. negative) elements of $X$. The support of $X$ is $\underline{X}=X^+\cup X^-$, and the opposite of $X$ is $-X=(X^-,X^+)$.  An {\em oriented matroid} $\mathcal{M}=(E,\mathcal{C})$ is an ordered pair of a finite set $E$ and  a collection  $\mathcal{C}$ of signed subsets of $E$ satisfying:
\begin{itemize}
  \item [{($\cal{C}1$)}] $\emptyset\notin\mathcal{C}$;
  \item [{($\cal{C}2$)}] $\mathcal{C}=-\mathcal{C}$;
  \item [{($\cal{C}3$)}] if $X,Y\in\mathcal{C}$ and $\underline{X}\subseteq \underline{Y}$, then $X=Y$ or $X=-Y$;
  \item [{($\cal{C}4$)}]if two distinct members $X,Y\in\mathcal{C}$, $X\ne-Y$ and $e\in X^+\cap Y^-$, then there is $Z\in\mathcal{C}$ such that $Z^+\subseteq X^+\cup Y^+-\{e\}$ and $Z^-\subseteq X^-\cup Y^--\{e\}$.
\end{itemize}
Members of $\mathcal{C}$ are called {signed circuits} of $\mathcal{M}$ and $E$ is called the {ground set} of $\mathcal{M}$. Let $\underline{\mathcal{C}}=\{\underline{X}\mid X\in\mathcal{C}\}$. Then the ordinary matroid $\underline{\mathcal{M}}=(E,\underline{\mathcal{C}})$ is said to be the underlying matroid of $\mathcal{M}$. Let $\mathcal{M}=\b(E,\mathcal{C}\b)$ be an oriented matroid on the ground set $E$. A signed circuit $X=(X^+,X^-)\in\mathcal{C}$ is called positive if $X^-=\emptyset$. An oriented matroid is said to be acyclic if it contains no positive circuits. For $A\subseteq E$, ${{}_{-A}}\mathcal{M}=\b(E,{{}_{-A}}\mathcal{C}\b)$ is a reorientation of  $\mathcal{M}$, where ${{}_{-A}}\mathcal{C}$ consists of signed circuits ${}_{-A}X=({}_{-A}X^+,{}_{-A}X^-)$ with ${}_{-A}X^+=(X^+-A)\cup(X^-\cap A)$ and ${}_{-A}X^-=(X^--A)\cup(X^+\cap A)$. Our first result is to establish a bijection between acyclic reorientations of $\cal{M}$ and NBC subsets of $\udl{\cal{M}}$. The deletion and contraction operations of ordinary matroids can be naturally extended to oriented matroids. More precisely, for $X\subseteq E$, the deletion $\mathcal{M}\backslash X=(E-X,\mathcal{C}\backslash X)$ has the signed circuit set
\[
\mathcal{C}\backslash X:=\{Y\mid Y\in\mathcal{C}\And\underline{Y}\subseteq E-X\},
\]
and the contraction $\mathcal{M}/X=(E-X,\mathcal{C}/X)$ has
\[
\mathcal{C}/X:={\rm min}\{Y\backslash X\mid Y\in\mathcal{C}\And \underline{Y}-X\ne\emptyset\},
\]
where $Y\backslash X=(Y^+-X,Y^--X)$. Likewise, a minor of $\mathcal{M}$ is obtained from $\mathcal{M}$ by a sequence of deletions and contractions. Note from \cite[3.3.3 Proposition]{Bjorner1999} that for disjoint subsets $X,Y$ of $E$,
\[
\mathcal{M}\backslash X/Y=\mathcal{M}/Y\backslash X,
\]
i.e., the minor of $\mathcal{M}$ does not depend on the ordering of deletion and contraction operations. Clearly, $\underline{\mathcal{M}\backslash X/Y}=\underline{\mathcal{M}}\backslash X/Y$.

\subsection{Bijection for oriented matroids}\label{Sec2.2}
Given an oriented matroid $\cal{M}=(E,\cal{C})$ with $E=\{e_1,e_2,\ldots,e_m\}$. Recall
\[
A(\mathcal{M})=\{A\subseteq E\mid {{}_{-A}}\mathcal{M} {\rm ~is~acyclic}\}\; \And\; \NBC(\udl{\cal{M}})=\{N\subseteq E\mid N {\rm ~is~NBC~subset~of~}\udl{\cal{M}}\}.
\]
In this section, the Blass-Sagan's algorithm will be extended to a bijection from $A(\mathcal{M})$ to $\NBC(\udl{\cal{M}})$. In the Blass-Sagan's algorithm, the key ingredient is the set $\mathscr{D}_k$ of all possible results of its first $k$ steps for $k=1,\ldots,m$. Likewise, below we will introduce the key set $\mathscr{N}_k$ and the bijection $\psi_k:\mathscr{N}_{k-1}\to \mathscr{N}_{k}$ whose composition will automatically lead to a bijection from $A(\mathcal{M})$ to $\NBC(\udl{\cal{M}})$. Note that if $\cal{M}$ has loops, then $\cal{M}$ has no acyclic reorientations and $\NBC(\udl{\cal{M}})=\emptyset$. We may assume that $\cal{M}$ is loopless and the ground set $E$ is totally ordered by $e_1\prec e_2\prec\cdots \prec e_m$. For $k=0,1,\ldots,m$, let $E_k=\{e_1,e_2,\ldots,e_k\}$. Note that if $N_k\subseteq E_k$ and $N_k^c:=E_k-N_k$, then the minor $\mathcal{M}\backslash N_k^c/N_k$ has the ground set $E-E_k$. Define $\mathscr{N}_k$ to be the set of pairings $(N_k,A_k)$ with $N_k\subseteq E_k$, $A_k\subseteq E-E_k$ satisfying
\begin{itemize}\label{bijec-1}
  \item [\I]  $N_k\in\NBC(\udl{\cal{M}})$,
  \item [\II] $\mathcal{M}_{k}:={}_{-A_k}\b(\mathcal{M}\backslash N_k^c/N_k\b)$ is acyclic.
\end{itemize}
The map $\psi_k:\mathscr{N}_{k-1}\to\mathscr{N}_{k}$ is defined as follows,
\begin{eqnarray}\label{psi_k}
\psi_k\b(N_{k-1},A_{k-1}\b)=
\begin{cases}
\b(N_{k-1}\cup\{e_{k}\},A_{k-1}\b) & \mbox{if\;} e_k\notin A_{k-1}\mbox{\;and\;}{}_{-e_k}\mathcal{M}_{k-1}\mbox{\;is acyclic}; \\
\b(N_{k-1},A_{k-1}\b) & \mbox{if\;} e_k\notin A_{k-1}\mbox{\;and\;}{}_{-e_k}\mathcal{M}_{k-1}\mbox{\;is not acyclic}; \\
\b(N_{k-1},A_{k-1}-\{e_k\}\b) & \mbox{if\;} e_k\in A_{k-1}.
\end{cases}
\end{eqnarray}
From \autoref{lemma8}, \autoref{lemma9} and \autoref{lemma10}, we will see that $\psi_k$ is well-defined, injective and surjective respectively, i.e., each $\psi_k$ is a bijection for $k=1,2,\ldots,m$. It follows that the composition $\psi:=\psi_m\circ\psi_{m-1}\circ\cdots\circ\psi_1$ is a bijection from $\mathscr{N}_0$ to $\mathscr{N}_m$. Note
\[
\mathscr{N}_0=\b\{(\emptyset,A_0)\mid A_0\in A(\mathcal{M})\b\}\quad\And\quad \mathscr{N}_m=\b\{(N_m,\emptyset)\mid N_m\in\NBC(\underline{\mathcal{M}})\b\}.
\] Immediately,  the map $\psi$ induces a bijection from $A(\mathcal{M})$ to $\NBC(\underline{\mathcal{M}})$. The example below is a glimpse of this bijection.
\begin{example}
{\rm  Suppose $\cal{M}=(E,\cal{C})$ is an oriented matroid with the ground set
 $E=\{e_1,e_2,e_3,e_4\}$  and $\cal{C}=\{\pm X_i\mid i=1,2,3,4\}$, where
\[
X_1=(e_1,e_2e_3),\quad X_2=(e_1e_2,e_4), \quad X_3=(e_1,e_3e_4),\quad X_4=(e_2e_3,e_4).
\]
Without causing confusion, we use the sequence $e_{i}e_j\cdots$ to denote the set $\{e_i,e_j,\ldots\}$. By routine analyses, the set of all acyclic reorientations  of $\cal{M}$ is
\[
A(\mathcal{M})=\{\emptyset,\,e_2,\,e_3,\,e_1e_3,\,e_2e_4,\,e_1e_2e_4,\,e_1e_3e_4,\,e_1e_2e_3e_4\}.
\]
It is clear that the underlying matroid $\udl{\cal{M}}$ is the uniform matroid of rank 2 and size 4. Given the total order $e_1\prec e_2\prec e_3\prec e_4$. The collection of all subsets of $E$ containing no broken circuits of $\udl{\cal{M}}$ is
\[
\NBC(\underline{\mathcal{M}})=\{\emptyset,\,e_1,\,e_2,\,e_3,\,e_4,\,e_1e_4,\,e_2e_4,\,e_3e_4\}.
\]
The specific bijection from $A(\mathcal{M})$ to $\NBC(\underline{\mathcal{M}})$ is presented below.
\begin{eqnarray*}
\begin{array}{cccccccccccccc}
A(\mathcal{M})     &\hspace{-2mm}\xrightarrow{}     &\hspace{-2mm}\mathscr{N}_0     &\hspace{-2mm}\xrightarrow{{\red\psi_1}}     &\hspace{-2mm}\mathscr{N}_1     &\hspace{-2mm}\xrightarrow{{\red\psi_2}}     &\hspace{-2mm}\mathscr{N}_2     &\hspace{-2mm}
\xrightarrow{{\red\psi_3}}     &\hspace{-2mm}\mathscr{N}_3     &\hspace{-2mm}\xrightarrow{{\red\psi_4}}     &\hspace{-2mm}\mathscr{N}_4     &\hspace{-2mm}\xrightarrow{}     &\hspace{-2mm}\NBC(\underline{\mathcal{M}})\\
\emptyset     &\hspace{-2mm}\mapsto     &\hspace{-2mm}(\emptyset,\emptyset)     &\hspace{-2mm}\mapsto     &\hspace{-2mm}(\emptyset,\emptyset)     &\hspace{-2mm}\mapsto     &\hspace{-2mm}(e_2,\emptyset)     &\hspace{-2mm}
\mapsto     &\hspace{-2mm}(e_2,\emptyset)     &\hspace{-2mm}\mapsto     &\hspace{-2mm}(e_2e_4,\emptyset)     &\hspace{-2mm}\mapsto     &\hspace{-2mm}e_2e_4\\
e_2     &\hspace{-2mm}\mapsto     &\hspace{-2mm}(\emptyset,e_2)     &\hspace{-2mm}\mapsto     &\hspace{-2mm}(\emptyset,e_2)     &\hspace{-2mm}\mapsto     &\hspace{-2mm}(\emptyset,\emptyset)     &\hspace{-2mm}
\mapsto     &\hspace{-2mm}(e_3,\emptyset)     &\hspace{-2mm}\mapsto     &\hspace{-2mm}(e_3e_4,\emptyset)     &\hspace{-2mm}\mapsto     &\hspace{-2mm}e_3e_4\\
e_3     &\hspace{-2mm}\mapsto     &\hspace{-2mm}(\emptyset,e_3)     &\hspace{-2mm}\mapsto     &\hspace{-2mm}(e_1,e_3)     &\hspace{-2mm}\mapsto     &\hspace{-2mm}(e_1,e_3)     &\hspace{-2mm}
\mapsto     &\hspace{-2mm}(e_1,\emptyset)     &\hspace{-2mm}\mapsto     &\hspace{-2mm}(e_1e_4,\emptyset)     &\hspace{-2mm}\mapsto     &\hspace{-2mm}e_1e_4\\
e_1e_3     &\hspace{-2mm}\mapsto     &\hspace{-2mm}(\emptyset,e_1e_3)     &\hspace{-2mm}\mapsto     &\hspace{-2mm}(\emptyset,e_3)     &\hspace{-2mm}\mapsto     &\hspace{-2mm}(\emptyset,e_3)     &\hspace{-2mm}
\mapsto     &\hspace{-2mm}(\emptyset,\emptyset)     &\hspace{-2mm}\mapsto     &\hspace{-2mm}(e_4,\emptyset)     &\hspace{-2mm}\mapsto     &\hspace{-2mm}e_4\\
e_2e_4     &\hspace{-2mm}\mapsto     &\hspace{-2mm}(\emptyset,e_2e_4)     &\hspace{-2mm}\mapsto     &\hspace{-2mm}(e_1,e_2e_4)     &\hspace{-2mm}\mapsto     &\hspace{-2mm}(e_1,e_4)     &\hspace{-2mm}
\mapsto     &\hspace{-2mm}(e_1,e_4)     &\hspace{-2mm}\mapsto     &\hspace{-2mm}(e_1,\emptyset)     &\hspace{-2mm}\mapsto     &\hspace{-2mm}e_1\\
e_1e_2e_4     &\hspace{-2mm}\mapsto     &\hspace{-2mm}(\emptyset,e_1e_2e_4)     &\hspace{-2mm}\mapsto     &\hspace{-2mm}(\emptyset,e_2e_4)     &\hspace{-2mm}\mapsto     &\hspace{-2mm}(\emptyset,e_4)     &\hspace{-2mm}
\mapsto     &\hspace{-2mm}(e_3,e_4)     &\hspace{-2mm}\mapsto     &\hspace{-2mm}(e_3,\emptyset)     &\hspace{-2mm}\mapsto     &\hspace{-2mm}e_3\\
e_1e_3e_4     &\hspace{-2mm}\mapsto     &\hspace{-2mm}(\emptyset,e_1e_3e_4)     &\hspace{-2mm}\mapsto     &\hspace{-2mm}(\emptyset,e_3e_4)     &\hspace{-2mm}\mapsto     &\hspace{-2mm}(e_2,e_3e_4)     &\hspace{-2mm}
\mapsto     &\hspace{-2mm}(e_2,e_4)     &\hspace{-2mm}\mapsto     &\hspace{-2mm}(e_2,\emptyset)     &\hspace{-2mm}\mapsto     &\hspace{-2mm}e_2\\
e_1e_2e_3e_4     &\hspace{-2mm}\mapsto     &\hspace{-2mm}\hspace{-1mm}(\emptyset,e_1e_2e_3e_4)     &\hspace{-2mm}\mapsto     &\hspace{-2mm}(\emptyset,e_2e_3e_4)     &\hspace{-2mm}\mapsto     &\hspace{-2mm}(\emptyset,e_3e_4)     &\hspace{-2mm}
\mapsto     &\hspace{-2mm}(\emptyset,e_4)     &\hspace{-2mm}\mapsto     &\hspace{-2mm}(\emptyset,\emptyset)     &\hspace{-2mm}\mapsto     &\hspace{-2mm}\emptyset
\end{array}
\end{eqnarray*}
}
\end{example}
To prove that $\psi_k$ is a bijection, we need the following proposition
\begin{proposition}\label{acyclic-contraction}
Let $\mathcal{M}=(E,\mathcal{C})$ be an oriented matroid, $e\in E$, and $(e,\emptyset)\notin\mathcal{C}$. If $\mathcal{M}/\{e\}$ is acyclic, then $\mathcal{M}$ is acyclic.
\end{proposition}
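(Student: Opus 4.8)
The plan is to prove the contrapositive: if $\mathcal{M}$ is not acyclic, then $\mathcal{M}/\{e\}$ is not acyclic. The hypothesis $(e,\emptyset)\notin\mathcal{C}$ will be used only through the observation that $e$ is not a loop of $\underline{\mathcal{M}}$: since $\mathcal{C}=-\mathcal{C}$ it also gives $(\emptyset,e)\notin\mathcal{C}$, so no signed circuit of $\mathcal{M}$ has support $\{e\}$, i.e. $\{e\}\notin\underline{\mathcal{C}}$.

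First I would pick a positive circuit $X=(X^+,\emptyset)$ of $\mathcal{M}$, which exists because $\mathcal{M}$ is not acyclic, and note $X^+\ne\emptyset$ and $X^+\ne\{e\}$, whence $X^+-\{e\}\ne\emptyset$. Then I would invoke three standard facts about oriented matroids from \cite{Bjorner1999}: every signed circuit is a vector; the vectors of the contraction $\mathcal{M}/\{e\}$ are exactly the restrictions to $E-\{e\}$ of the vectors of $\mathcal{M}$; and the support of any nonzero positive vector contains the support of a positive circuit (the conformal decomposition of vectors into circuits). Combining these, the restriction of $X$ to $E-\{e\}$ is a vector of $\mathcal{M}/\{e\}$ which is positive (since $X^-=\emptyset$) and nonzero (its support is $X^+-\{e\}$); hence $\mathcal{M}/\{e\}$ has a positive circuit, i.e. it is not acyclic, as desired. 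This also makes transparent why the hypothesis is needed: if $e$ were a loop the positive circuit $(e,\emptyset)$ would restrict to the zero vector.

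An alternative route stays on the cocircuit side. From $\mathcal{M}/\{e\}$ acyclic one gets that the all-positive sign vector on $E-\{e\}$ is a covector of $\mathcal{M}/\{e\}$, which lifts to a covector $W$ of $\mathcal{M}$ with $W(e)=0$ and $W(f)=+$ for every $f\ne e$. Since $e$ is not a loop, some cocircuit $D$ of $\mathcal{M}$ has $e$ in its support, and after possibly replacing $D$ by $-D$ we may assume the $e$-coordinate of $D$ is $+$; then $W\circ D$ is again a covector of $\mathcal{M}$, and it equals the all-positive sign vector on $E$ (it agrees with $W$ off $e$, and equals $D$ at $e$), so $\mathcal{M}$ is acyclic. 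I expect the only real obstacle to be bookkeeping: making sure the minor, composition, and duality statements are quoted in exactly the form used, and in particular that the characterization ``$\mathcal{N}$ is acyclic $\iff$ the all-positive sign vector is a covector of $\mathcal{N}$ $\iff$ $\mathcal{N}$ has no positive vector'' (the oriented-matroid Farkas lemma) is applied correctly. The one genuinely nontrivial ingredient is the conformal decomposition of a positive vector into positive circuits (equivalently, that Farkas lemma), which I would simply cite from \cite{Bjorner1999} rather than reprove; no delicate estimate or new construction is required beyond it.
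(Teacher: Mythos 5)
Your proof is correct, but it follows a genuinely different route from the paper's. The paper argues entirely at the level of the circuit axioms: assuming a positive circuit $Y_0$ of $\mathcal{M}$, it first shows $e\notin\underline{Y_0}$, then considers the set of signed circuits $Y$ with $e\in\underline{Y}$ and $\underline{Y}-\{e\}\subseteq\underline{Y_0}$, shows this set is nonempty using the definition of the contraction's circuits together with axiom $(\mathcal{C}3)$, and then repeatedly applies circuit elimination $(\mathcal{C}4)$ against $Y_0$ to strip away all negative elements other than $e$, producing a circuit $Y=(Y^+,e)$ with $Y^+\ne\emptyset$ (here is where $(e,\emptyset)\notin\mathcal{C}$ enters) whose image $Y\backslash\{e\}=(Y^+,\emptyset)$ is a positive circuit of $\mathcal{M}/\{e\}$ --- the desired contradiction. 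You instead invoke the vector structure of oriented matroids: that $\mathcal{V}(\mathcal{M}/\{e\})$ consists of the restrictions of vectors of $\mathcal{M}$, and that a nonzero positive vector conformally decomposes into positive circuits; your second route dualizes this via covectors and the oriented-matroid Farkas lemma. Both of your arguments are sound and use the hypothesis $(e,\emptyset)\notin\mathcal{C}$ in exactly the right place (to guarantee $X^+-\{e\}\ne\emptyset$, respectively to produce a cocircuit through $e$). What your approach buys is brevity and conceptual clarity, at the cost of importing nontrivial theorems from \cite{Bjorner1999}; what the paper's approach buys is self-containedness --- its elimination argument is in effect a from-scratch proof of precisely the special case of conformal decomposition that is needed, using nothing beyond $(\mathcal{C}1)$--$(\mathcal{C}4)$ and the definition of contraction. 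If you write your version up, do quote the minor and decomposition statements in the exact form given in \cite{Bjorner1999} (vectors of $\mathcal{M}/A$ versus vectors of $\mathcal{M}\backslash A$ behave differently), since that bookkeeping is the only place an error could creep in.
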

\begin{proof}
For convenience, we denote by $\mathcal{M}'=\mathcal{M}/\{e\}$. It follows that the set of signed circuits of $\mathcal{M}'$ is
\begin{equation*}\label{signcircuit1}
\cal{C}'={\rm min}\b\{X\backslash\{e\}\mid  X \in \mathcal{C}\And\underline{X}-\{e\}\ne\emptyset\b\}.
\end{equation*}
From the axiom $(\cal{C}3)$ on signed circuits, we have an easy fact that for any $X \in \mathcal{C}$ with $\{e\}\subsetneqq\underline{X}$, we have $X\backslash\{e\}\in \cal{C}'$.
Suppose $\mathcal{M}$ is not acyclic, i.e.,  $\mathcal{M}$ has a positive circuit $Y_0=(\underline{Y_0},\emptyset)$. The above fact implies that $e\notin\underline{Y_0}$. Otherwise, $Y_0\backslash\{e\}$ is a positive circuit of $\mathcal{M}'$, a contradiction with $\mathcal{M}'$ being acyclic.  Let
\[
\mathcal{C}_{e,\sst Y_0}:=\b\{Y\in\mathcal{C}\mid e\in \underline{Y}\And \underline{Y}-\{e\}\subseteq \underline{Y_0}\b\}.
\]

Firstly, we claim $\mathcal{C}_{e,\sst Y_0}\ne \emptyset$. Since $Y_0\in \cal{C}$  and $e\notin\underline{Y_0}$, the definition of $\cal{C}'$ implies that there is a signed circuit $Y_1\in \cal{C}$ such that $Y_1\backslash\{e\}\in \cal{C}'$ and $\udl{Y_1}- \{e\}\subseteq \udl{Y_0}$. It remains to prove $e\in \udl{Y_1}$. If not,  we have $Y_1\in \cal{C}$ and $\udl{Y_1}\subseteq \udl{Y_0}$. The axiom $(\cal{C}3)$ on signed circuits of $\cal{M}$ implies $Y_1=Y_0$ or $-Y_0$, i.e., $Y_0\in \cal{C}'$, a contradiction with $\mathcal{M}'$ being acyclic.

Secondly, we claim that $\mathcal{C}_{e,\sst Y_0}$ contains a signed circuit with only one negative element $e$. Note that $Y\in \mathcal{C}_{e,\sst Y_0}\Leftrightarrow -Y\in \mathcal{C}_{e,\sst Y_0}$.    We may assume $Y_1\in \mathcal{C}_{e,\sst Y_0}$ and  $e\in Y_1^-$. Clearly, the claim is true if $Y_1^-=\{e\}$ . If $Y_1^-$ contains a distinct element $f\ne e$, obviously we have $f\in Y_0^+$. The axiom $(\cal{C}4)$ implies that there exists a signed circuit $Y_2$ of $\cal{M}$ satisfying
\[
Y_2^+\subseteq Y_0^+\cup Y_1^+-\{f\}=\underline{Y_0}-\{f\}\quad\And\quad Y_2^-\subseteq Y_0^-\cup Y_1^--\{f\} =Y_1^--\{f\},
\]
which means $\underline{Y_2}-\{e\}\subsetneqq\underline{Y_0}$. Applying the axiom $(\cal{C}3)$ again, we have $e\in\underline{Y_2}$. Consequently, we obtain $Y_2\in \mathcal{C}_{e,\sst Y_0}$ with less negative elements than $Y_1$. Continuing this process, after finitely many steps it will produce a signed circuit in $\mathcal{C}_{e,\sst Y_0}$ with only one negative element $e$. So the claim is verified.

Now we may assume $Y=(Y^+,e)\in \mathcal{C}_{e,\sst Y_0}$. The assumption $(e,\emptyset)\notin\mathcal{C}$ implies $Y^+\ne \emptyset$. From the fact stated at the very beginning, we have $Y\backslash\{e\}=(Y^+,\emptyset)\in \cal{C}'$, a contradiction with $\cal{M}'$ being acyclic. This completes the proof.
\end{proof}

\subsection{Proofs on the bijection $\psi_k$}
\begin{lemma}\label{lemma8}
$\psi_k$ is well-defined.
\end{lemma}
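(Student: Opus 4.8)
The plan is to fix an arbitrary pair $(N_{k-1},A_{k-1})\in\mathscr{N}_{k-1}$, write $(N_k,A_k):=\psi_k(N_{k-1},A_{k-1})$, and verify in each of the three branches of the definition of $\psi_k$ that $N_k\subseteq E_k$, $A_k\subseteq E-E_k$, and that conditions \I\ and \II\ hold. The set-theoretic containments are immediate once one observes that $e_k\notin A_{k-1}$ in the first two branches. The bookkeeping that drives the proof is the way the relevant minor evolves: in the last two branches $N_k=N_{k-1}$ and $N_k^c=N_{k-1}^c\cup\{e_k\}$, so $\mathcal{M}\backslash N_k^c/N_k=\b(\mathcal{M}\backslash N_{k-1}^c/N_{k-1}\b)\backslash\{e_k\}$; in the first branch $N_k=N_{k-1}\cup\{e_k\}$ and $N_k^c=N_{k-1}^c$, so $\mathcal{M}\backslash N_k^c/N_k=\b(\mathcal{M}\backslash N_{k-1}^c/N_{k-1}\b)/\{e_k\}$. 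Since reorientation commutes with deletion and with contraction at elements outside the reorienting set, and since $e_k\notin A_{k-1}$ in the first two branches while $A_k=A_{k-1}-\{e_k\}$ in the third, we get $\mathcal{M}_k=\mathcal{M}_{k-1}\backslash\{e_k\}$ in branches two and three, and $\mathcal{M}_k=\mathcal{M}_{k-1}/\{e_k\}$ in branch one. Branches two and three are then short: condition \I\ holds because $N_k=N_{k-1}$ is already an NBC subset, and condition \II\ holds because a positive circuit of $\mathcal{M}_{k-1}\backslash\{e_k\}$ would be a positive circuit of the acyclic oriented matroid $\mathcal{M}_{k-1}$. So the real work is entirely in branch one.

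For condition \II\ in branch one I would isolate the auxiliary fact that if an oriented matroid $\mathcal{N}$ and its reorientation ${}_{-e}\mathcal{N}$ are both acyclic, then $\mathcal{N}/\{e\}$ is acyclic. Indeed, a positive circuit of $\mathcal{N}/\{e\}$ has the form $Y\backslash\{e\}$ for some signed circuit $Y$ of $\mathcal{N}$, and positivity forces $Y^-\subseteq\{e\}$; if $Y^-=\emptyset$ then $Y$ is a positive circuit of $\mathcal{N}$, and if $Y^-=\{e\}$ then ${}_{-e}Y=(Y^+\cup\{e\},\emptyset)$ is a positive circuit of ${}_{-e}\mathcal{N}$, a contradiction in either case. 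Branch one is entered precisely when $\mathcal{M}_{k-1}$ is acyclic (condition \II\ for the input pair) and ${}_{-e_k}\mathcal{M}_{k-1}$ is acyclic (the branch condition), so this auxiliary fact applies with $\mathcal{N}=\mathcal{M}_{k-1}$, $e=e_k$, and yields that $\mathcal{M}_k=\mathcal{M}_{k-1}/\{e_k\}$ is acyclic.

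The main obstacle is condition \I\ in branch one, that is, $N_k=N_{k-1}\cup\{e_k\}\in\NBC(\udl{\cal{M}})$. I would argue by contradiction: suppose $N_{k-1}\cup\{e_k\}$ contains a broken circuit $B=C-\{\max C\}$ coming from a circuit $C$ of $\udl{\cal{M}}$. Since $N_{k-1}$ is NBC and $N_{k-1}\subseteq E_{k-1}$, we must have $e_k\in B$, hence $e_k\in C$ and $\max C=e_j$ with $j>k$; because $C-\{e_j\}\subseteq E_k$ this forces $C\cap(E-E_{k-1})=\{e_k,e_j\}$ and $C\cap N_{k-1}^c=\emptyset$, so $C\subseteq E-N_{k-1}^c$ with $C-N_{k-1}=\{e_k,e_j\}$. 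By the description of contraction circuits, $\{e_k,e_j\}$ then contains a circuit of $\udl{\cal{M}}\backslash N_{k-1}^c/N_{k-1}$, the underlying matroid of $\mathcal{M}_{k-1}$, so at least one of $\{e_k\}$, $\{e_j\}$, $\{e_k,e_j\}$ is a circuit there, and I would rule out each. If $\{e_k\}$ is a circuit, then $\mathcal{M}_{k-1}$ has a circuit supported on $\{e_k\}$, hence the positive circuit $(\{e_k\},\emptyset)$, contradicting acyclicity. If $\{e_j\}$ is a circuit, lift it to a circuit $Y_0$ of $\udl{\cal{M}}$ with $Y_0\subseteq E-N_{k-1}^c$ and $Y_0-N_{k-1}=\{e_j\}$; applying circuit elimination $(C3)$ to the distinct circuits $C$ and $Y_0$ at $e_j$ produces a circuit $Z\subseteq N_{k-1}\cup\{e_k\}\subseteq E_k$, and then either $Z\subseteq N_{k-1}$ (impossible, since an NBC set is independent) or $e_k\in Z$, in which case $\max Z=e_k$ and $Z-\{e_k\}$ (nonempty, as $\udl{\cal{M}}$ is loopless) is a broken circuit inside $N_{k-1}$, again a contradiction. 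Finally, if $\{e_k,e_j\}$ is a circuit of the underlying matroid, then acyclicity of $\mathcal{M}_{k-1}$ forces its signed circuit on the support $\{e_k,e_j\}$ to have exactly one positive and one negative element; reorienting at $e_k$ (legitimate since $e_k\notin A_{k-1}$) makes both elements agree in sign, so ${}_{-e_k}\mathcal{M}_{k-1}$ has a positive circuit, contradicting the branch condition. Every case being impossible, $N_k\in\NBC(\udl{\cal{M}})$, and $\psi_k$ is well-defined. The genuinely delicate point is this trichotomy: reducing a hypothetical broken circuit through $e_k$ to a two-element dependency $\{e_k,e_j\}$ in the minor $\mathcal{M}_{k-1}$, and then using each of the three standing hypotheses ($N_{k-1}$ NBC, $\mathcal{M}_{k-1}$ acyclic, ${}_{-e_k}\mathcal{M}_{k-1}$ acyclic) to eliminate exactly one shape of that dependency.
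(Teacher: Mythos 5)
Your proof is correct and follows essentially the same route as the paper: the same three-branch case analysis, the same reduction of a hypothetical broken circuit through $e_k$ to a two-element circuit $\{e_k,e_j\}$ of the minor $\underline{\mathcal{M}_{k-1}}$ ruled out by the two acyclicity hypotheses, and the same contraction argument for acyclicity of $\mathcal{M}_k$ (your auxiliary fact is the content of the paper's Case~1 computation, with \autoref{acyclic-contraction} reserved for the surjectivity proof). The only difference is cosmetic: where you eliminate the subcase that $\{e_j\}$ is a loop of the minor via circuit elimination in $\underline{\mathcal{M}}$, the paper simply notes that a loop of $\underline{\mathcal{M}_{k-1}}$ would yield a positive signed circuit of the acyclic $\mathcal{M}_{k-1}$.
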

\begin{proof}
Given a paring $(N_{k-1},A_{k-1})\in\mathscr{N}_{k-1}$ and let $(N_k,A_k)=\psi_k(N_{k-1},A_{k-1})$. From \eqref{psi_k}, we have $N_k\subseteq E_k$ and $A_k\subseteq E-E_k$ clearly. To prove $(N_k,A_k)\in \mathscr{N}_{k}$, it is enough to show that $N_k\in\NBC(\udl{\cal{M}})$ and $\mathcal{M}_{k}:={}_{-A_k}\b(\mathcal{M}\backslash N_k^c/N_k\b)$ is acyclic. We proceed the proof in the following three cases.

\vspace{3mm}{\rm Case 1.} If $e_k\notin A_{k-1}$ and ${}_{-e_k}\mathcal{M}_{k-1}$ is acyclic, then
\[
N_k=N_{k-1}\cup\{e_k\} \quad\And\quad A_k=A_{k-1}.
\]
Since $(N_{k-1},A_{k-1})\in\mathscr{N}_{k-1}$, we have $N_{k-1}\in\NBC(\underline{\mathcal{M}})$ which implies $N_k$ contains no circuits of $\udl{\cal{M}}$. Suppose $N_k$ contains broken circuits of $\udl{\cal{M}}$, i.e., $N_k\notin \NBC(\underline{\mathcal{M}})$. There is a circuit $X\in \udl{\cal{C}}$ with the maximal element $e_l$ such that $X\subseteq N_k\cup\{e_l\}$. Moreover, we have $l>k$ and $e_k\in X$. Otherwise if $l\le k$ (or $e_k\notin X$), then $N_{k-1}$ contains the broken circuit $X-{e_l}$ which contradicts $N_{k-1}\in\NBC(\udl{\cal{M}})$. Recall the definitions of deletion and contraction operations that the set of circuits of $\underline{\mathcal{M}_{k-1}}= \underline{\mathcal{M}}\backslash N_{k-1}^c/N_{k-1}$ is
\[
{\rm min}\b\{Y-N_{k-1}\mid Y\in\underline{\mathcal{C}},\, Y\subseteq E-N_{k-1}^c\And Y-N_{k-1}\ne\emptyset\b\}.
\]
Since $\mathcal{M}_{k-1}$ is acyclic, neither $e_k$ nor $e_l$ is a loop of  $\underline{\mathcal{M}_{k-1}}$. It follows that $X-N_{k-1}=\{e_k,e_l\}$ is a circuit of $\underline{\mathcal{M}_{k-1}}$. Then either  $(e_le_k,\emptyset)$ or $(e_l,e_k)$ is a signed circuit of $\mathcal{M}_{k-1}$,  which is impossible since both $\mathcal{M}_{k-1}$ and ${}_{-e_k}\mathcal{M}_{k-1}$ are acyclic. So  we have $N_k\in\NBC(\underline{\mathcal{M}})$. It remains to show that $\mathcal{M}_k$ is acyclic. Since $\mathcal{M}_k={}_{-A_k}\b(\mathcal{M}\backslash N_k^c/N_k\b)$, $N_k=N_{k-1}\cup\{e_k\}$, and $A_k=A_{k-1}$, we have
$\mathcal{M}_k=\mathcal{M}_{k-1}/\{e_k\}$, which means that the set of signed circuits of $\mathcal{M}_k$ is
\begin{equation*}\label{circuit1}
{\rm min}\b\{Y\backslash\{e_k\}\mid Y{\rm~is~a~signed~circuit~of~}\mathcal{M}_{k-1} \And\underline{Y}-\{e_k\}\ne\emptyset\b\}.
\end{equation*}
Suppose $\mathcal{M}_k$ is not acyclic, i.e., $\mathcal{M}_k$ has a positive circuit, say $Z=(\underline{Z},\emptyset)$. Then $Z=Y\backslash\{e_k\}$ for some signed circuit $Y$ of $\mathcal{M}_{k-1}$. Note that $\mathcal{M}_{k-1}$ is acyclic, which implies $e_k\in \udl{Y}$. So we have $Y=(\underline{Z},e_k)$ clearly, i.e., the oriented matroid ${}_{-e_k}\mathcal{M}_{k-1}$ has a positive circuit ${}_{-e_k}Y=\b(\underline{Z}\cup\{e_k\},\emptyset\b)$, contradicting that  ${}_{-e_k}\mathcal{M}_{k-1}$ is acyclic. So we have $(N_k,A_k)\in\mathscr{N}_k$ in this case.

\vspace{3mm}{\rm Case 2.}  If $e_k\notin A_{k-1}$ and ${}_{-e_k}\mathcal{M}_{k-1}$ is not acyclic, then
\[
N_k=N_{k-1}\quad \And \quad A_k=A_{k-1}.
\]
It follows that $\mathcal{M}_k={}_{-A_k}\b(\mathcal{M}\backslash N_k^c/N_k\b)=\mathcal{M}_{k-1}\backslash\{e_k\}$ and each positive circuit of $\mathcal{M}_k$ is also a positive circuit of $\mathcal{M}_{k-1}$. Note that $\mathcal{M}_{k-1}$  contains no positive circuits, so does $\mathcal{M}_k$. Namely, $\mathcal{M}_k$ is acyclic. Clearly, we have $N_k=N_{k-1}\in\NBC(\underline{\mathcal{M}})$. So $(N_k,A_k)\in\mathscr{N}_k$.

\vspace{3mm}{\rm Case 3.}   If $e_k\in A_{k-1}$, then
\[
N_k=N_{k-1}\quad \And \quad A_k=A_{k-1}-\{e_k\}.
\]
Note that the element $e_k$ is not in the ground set of $\mathcal{M}_k$, which implies $\mathcal{M}_k=\mathcal{M}_{k-1}\backslash\{e_k\}$. By similar arguments as Case 2, we have $(N_k,A_k)\in\mathscr{N}_k$.
\end{proof}

\begin{lemma}\label{lemma9}
$\psi_k$ is injective.
\end{lemma}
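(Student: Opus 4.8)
The plan is to show that the image $(N_k,A_k):=\psi_k(N_{k-1},A_{k-1})$ determines the preimage $(N_{k-1},A_{k-1})\in\mathscr N_{k-1}$ uniquely, by reading off from $(N_k,A_k)$ which case of \eqref{psi_k} was applied. Since $N_{k-1}\subseteq E_{k-1}$ never contains $e_k$, Case~1 is the unique case whose output satisfies $e_k\in N_k$, whereas Cases~2 and~3 both leave $N_{k-1}$ untouched and so have $e_k\notin N_k$. Hence, if $e_k\in N_k$ we must be in Case~1, which forces $N_{k-1}=N_k-\{e_k\}$ and $A_{k-1}=A_k$; and if $e_k\notin N_k$, then in either remaining case $N_{k-1}=N_k$, while Case~2 forces $A_{k-1}=A_k$ and Case~3 forces $A_{k-1}=A_k\cup\{e_k\}$ (since $e_k\in A_{k-1}$ there). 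Thus within any single case the preimage is recovered, and the only way injectivity could fail is if a pair $(N_k,A_k)$ with $e_k\notin N_k$ were simultaneously the Case~2 image of $(N_k,A_k)$ and the Case~3 image of $(N_k,A_k\cup\{e_k\})$.

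To rule this out I would compare the two oriented matroids attached by condition~\II\ to these two candidate preimages. Both candidates share the first coordinate $N_k$, so both involve the same minor $\mathcal M\backslash(E_{k-1}-N_k)/N_k$, whose ground set $E-E_{k-1}$ contains $e_k$; write $\mathcal N$ for the reorientation of this minor on $A_k$, which is precisely the oriented matroid $\mathcal M_{k-1}$ associated to the pair $(N_k,A_k)$. Because $A_k$ and $\{e_k\}$ are disjoint, reorienting the same minor on $A_k\cup\{e_k\}$ yields ${}_{-e_k}\mathcal N$, which is the oriented matroid associated to the pair $(N_k,A_k\cup\{e_k\})$. Now the Case~2 hypothesis, applied to the first candidate preimage, says that ${}_{-e_k}\mathcal N$ is \emph{not} acyclic, whereas membership of the second candidate $(N_k,A_k\cup\{e_k\})$ in $\mathscr N_{k-1}$ demands, via condition~\II, that ${}_{-e_k}\mathcal N$ \emph{is} acyclic---a contradiction. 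Hence no pair of $\mathscr N_k$ has two preimages, and $\psi_k$ is injective.

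The only real obstacle is the bookkeeping in the second paragraph: one must correctly observe that the two candidate preimages feed into the same minor, confirm that $e_k$ lies in that minor's ground set, and use the identity ${}_{-(A_k\cup\{e_k\})}(\cdot)={}_{-e_k}\bigl({}_{-A_k}(\cdot)\bigr)$ for disjoint $A_k$ and $\{e_k\}$, so that the two associated oriented matroids are genuinely related by the single reorientation ${}_{-e_k}$. Granting that, the clash between the Case~2 condition and condition~\II\ is immediate, and the within-case recoveries listed in the first paragraph are purely set-theoretic.
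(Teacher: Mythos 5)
Your proposal is correct and follows essentially the same route as the paper: both arguments reduce injectivity to the single possible clash between a Case~2 preimage $(N_k,A_k)$ and a Case~3 preimage $(N_k,A_k\cup\{e_k\})$, and both derive the contradiction by observing that the Case~2 condition declares ${}_{-e_k}\bigl({}_{-A_k}(\mathcal{M}\backslash(E_{k-1}-N_k)/N_k)\bigr)$ not acyclic while condition \II{} for the Case~3 candidate's membership in $\mathscr{N}_{k-1}$ declares the very same oriented matroid acyclic. Your write-up is, if anything, a cleaner organization of the paper's own argument.
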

\begin{proof}If $\psi_k(N_{k-1},A_{k-1})=(N_k,A_k)$, the definition \eqref{psi_k} implies $N_{k-1}=N_k-\{e_k\}$ and $A_k=A_{k-1}-\{e_k\}$.
Suppose there is another element $(N_{k-1}',A_{k-1}')$ satisfying $\psi_k(N_{k-1}',A_{k-1}')=(N_k,A_k)$. Obviously, $N_{k-1}'=N_{k-1}=N_k-\{e_k\}$ and
\[
A_k=A_{k-1}-\{e_k\}=A_{k-1}'-\{e_k\}.
\]
Suppose $A_{k-1}\neq A_{k-1}'$ and assume  $e_k\in A_{k-1}$, $e_k\notin A_{k-1}'$, i.e., $A_{k-1}=A_k\sqcup\{e_k\}$ and $A_{k-1}'=A_k$. Notice $e_k\in A_{k-1}$ corresponds to the 3rd case in  \eqref{psi_k}, which implies $N_{k-1}=N_k$ and $(N_k,A_k)=(N_{k-1}',A_{k-1}')$.  From the 2nd case of \eqref{psi_k}, the oriented matroid
\[
{}_{-A_{k-1}'\sqcup\{e_k\}}\b(\mathcal{M}\backslash(N_{k-1}')^c/N_{k-1}'\b) ={}_{-A_{k-1}}\b(\mathcal{M}\backslash N_{k-1}^c/N_{k-1}\b)
\]
is not acyclic, a contradiction with $\mathcal{M}_{k-1}$ being acyclic.
\end{proof}

\begin{lemma}\label{lemma10}
$\psi_k$ is surjective.
\end{lemma}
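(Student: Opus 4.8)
The plan is to fix $(N_k,A_k)\in\mathscr{N}_k$ and produce a preimage under $\psi_k$. Reading off \eqref{psi_k}, any preimage $(N_{k-1},A_{k-1})$ must satisfy $N_{k-1}=N_k-\{e_k\}$ and $A_{k-1}\in\{A_k,\,A_k\cup\{e_k\}\}$, and moreover the first branch can only occur when $e_k\in N_k$; so if $e_k\in N_k$ there is a single candidate $(N_k-\{e_k\},A_k)$, and if $e_k\notin N_k$ there are exactly two, namely $(N_k,A_k)$ and $(N_k,A_k\cup\{e_k\})$. It then suffices to check which candidate lies in $\mathscr{N}_{k-1}$ and falls into the branch of \eqref{psi_k} that outputs $(N_k,A_k)$. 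Throughout I will use that a subset of an NBC subset is again NBC and in particular independent in $\underline{\mathcal{M}}$ (a circuit inside $N$ yields a broken circuit inside $N$ after deleting its $\prec$-maximal element), and that reorientation commutes with deletion and contraction of disjoint elements, while deleting or contracting an element washes out any reorientation at that element. Writing $\mathcal{N}:={}_{-A_k}\b(\mathcal{M}\backslash N_{k-1}^c/N_{k-1}\b)$ with $N_{k-1}=N_k-\{e_k\}$ (a minor whose ground set still contains $e_k$), comparing the complements $N_{k-1}^c,N_k^c$ gives $\mathcal{M}_k=\mathcal{N}/\{e_k\}$ when $e_k\in N_k$ and $\mathcal{M}_k=\mathcal{N}\backslash\{e_k\}$ when $e_k\notin N_k$; in either case $e_k$ is not a loop of $\underline{\mathcal{N}}$, since this amounts to $N_k$ (resp.\ $N_k\cup\{e_k\}$) being independent, which holds because $N_k$ is NBC and any circuit it could form with $e_k$ is $\prec$-maximized at $e_k$, hence would leave a broken circuit inside $N_k$.

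Suppose first $e_k\in N_k$. The only candidate is $(N_{k-1},A_k)$, coming from the first branch of \eqref{psi_k}; here $N_{k-1}\subseteq N_k$ is NBC and $\mathcal{M}_{k-1}=\mathcal{N}$. Since $\mathcal{N}/\{e_k\}=\mathcal{M}_k$ is acyclic and $(e_k,\emptyset)\notin\mathcal{C}(\mathcal{N})$, \autoref{acyclic-contraction} shows $\mathcal{N}$ is acyclic, so $(N_{k-1},A_k)\in\mathscr{N}_{k-1}$. Applying \autoref{acyclic-contraction} again to ${}_{-e_k}\mathcal{N}$ — whose contraction at $e_k$ is again $\mathcal{M}_k$ and for which $e_k$ is still not a loop — shows ${}_{-e_k}\mathcal{N}$ is acyclic; as $e_k\notin A_k$, the first branch applies and $\psi_k(N_{k-1},A_k)=(N_{k-1}\cup\{e_k\},A_k)=(N_k,A_k)$.

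Now suppose $e_k\notin N_k$, so $N_{k-1}=N_k$ and the candidates $(N_k,A_k)$ and $(N_k,A_k\cup\{e_k\})$ have $\mathcal{M}_{k-1}$ equal to $\mathcal{N}$ and ${}_{-e_k}\mathcal{N}$ respectively. If ${}_{-e_k}\mathcal{N}$ is acyclic I take $A_{k-1}=A_k\cup\{e_k\}$: then $(N_k,A_k\cup\{e_k\})\in\mathscr{N}_{k-1}$ and, since $e_k\in A_{k-1}$, the third branch of \eqref{psi_k} returns $(N_k,A_k)$. If ${}_{-e_k}\mathcal{N}$ is not acyclic I take $A_{k-1}=A_k$: the second branch of \eqref{psi_k} applies precisely because ${}_{-e_k}\mathcal{M}_{k-1}={}_{-e_k}\mathcal{N}$ is non-acyclic, and it returns $(N_k,A_k)$ — provided we know $\mathcal{N}$ itself is acyclic, so that $(N_k,A_k)\in\mathscr{N}_{k-1}$.

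What remains, and is the crux, is the one-element extension property: \emph{if $e$ is not a loop of an oriented matroid $\mathcal{M}$ and $\mathcal{M}\backslash\{e\}$ is acyclic, then $\mathcal{M}$ or ${}_{-e}\mathcal{M}$ is acyclic}, applied with $\mathcal{M}=\mathcal{N}$, $e=e_k$. I would prove it by circuit elimination: were $\mathcal{M}$ to have a positive circuit $P$ and ${}_{-e}\mathcal{M}$ a positive circuit ${}_{-e}Q$ with $Q\in\mathcal{C}(\mathcal{M})$, then acyclicity of $\mathcal{M}\backslash\{e\}$ forces $e\in P^+$ and $Q^-=\{e\}$, while non-loopness of $e$ forces $P\ne\pm Q$; axiom $(\mathcal{C}4)$ applied to $P,Q$ at $e$ then produces a signed circuit $Z$ with $Z^-=\emptyset$ and $e\notin\underline{Z}$, i.e.\ a positive circuit of $\mathcal{M}\backslash\{e\}$, a contradiction. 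Granting this, all cases close. The main obstacle is exactly isolating and proving this extension property cleanly; the remaining work — the commutation identities and the loop/independence claims assembled in the first paragraph — is routine.
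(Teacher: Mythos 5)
Your proposal is correct and follows essentially the same route as the paper: the same three-way case split on $e_k\in N_k$ versus the acyclicity of ${}_{-e_k}\mathcal{M}_{k-1}'$, the same double application of \autoref{acyclic-contraction} when $e_k\in N_k$, and the same circuit-elimination via axiom $(\mathcal{C}4)$ in the remaining hard case (which you merely package as a standalone ``one-element extension'' statement, while the paper runs the identical elimination inline). The only cosmetic difference is that you verify $e_k$ is not a loop directly from the NBC condition, whereas the paper deduces looplessness of the whole minor via the flat characterization; both are sound.
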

\begin{proof}
We will proceed the proof in 3 cases to show that for any $(N_k,A_k)\in\mathscr{N}_{k}$, there is a paring $(N_{k-1},A_{k-1})\in\mathscr{N}_{k-1}$ satisfying $\psi_k(N_{k-1},A_{k-1})=(N_k,A_k)$. From the definition of $\mathscr{N}_{k}$, we have that $N_{k}\in\NBC(\underline{\mathcal{M}})$ and $\mathcal{M}_{k}:={}_{-A_k}\b(\mathcal{M}\backslash N_k^c/N_k\b)$ is acyclic.\\
~\\
{\rm Case 1.} When $e_k\in N_k$, take
\[N_{k-1}=N_k-\{e_k\}\quad \And \quad A_{k-1}=A_k.\]
Obviously, we have $N_{k-1}\subseteq E_{k-1}$ and $A_{k-1}\subseteq E-E_{k-1}$. Moreover, $N_{k}\in\NBC(\underline{\mathcal{M}})$ implies $N_{k-1}\in\NBC(\underline{\mathcal{M}})$. We claim that $\udl{\cal{M}_{k-1}}:=\underline{\mathcal{M}}\backslash N_{k-1}^c/N_{k-1}$ is loopless.  Indeed, exercise 8 in \cite[P. 105]{Oxley2011} states that $\underline{\mathcal{M}}\backslash N_{k-1}^c/N_{k-1}$ is loopless if and only if $N_{k-1}$ is a flat of $\underline{\mathcal{M}}\backslash N_{k-1}^c$. Suppose $N_{k-1}$ is not a flat of $\underline{\mathcal{M}}\backslash N^c_{k-1}$. Then for some element $e_l\notin E_{k-1}$, there is a circuit $Y$ of $\underline{\mathcal{M}}\backslash N_{k-1}^c$ such that $e_l\in Y\subseteq N_{k-1}\cup\{e_l\}$. It implies that $N_{k-1}$ contains a broken circuit $Y-\{e_l\}$ of $\underline{\mathcal{M}}$, a contradiction with $N_{k-1}\in\NBC(\underline{M})$, which proves the claim. Note that $\mathcal{M}_k=\mathcal{M}_{k-1}/\{e_k\}$ and $(e_k,\emptyset)$ is not a signed circuit of $\cal{M}_{k-1}$ since $\udl{\cal{M}_{k-1}}$ is loopless. Immediately from \autoref{acyclic-contraction}, $\mathcal{M}_{k-1}$ is acyclic. Likewise, we have ${}_{-e_k}\mathcal{M}_{k-1}$  is acyclic since $\big({}_{-e_k}\mathcal{M}_{k-1}\big)/\{e_k\}=\mathcal{M}_{k-1}/\{e_k\}=\mathcal{M}_k$.
So we obtain $(N_{k-1},A_{k-1})\in\mathscr{N}_{k-1}$ and $\psi_k(N_{k-1},A_{k-1})=(N_k,A_k)$ from the definition of $\psi_k$, see \eqref{psi_k}. It completes the proof.
\vspace{3mm}\newline
To proceed the other two cases, let \[\mathcal{M}_{k-1}':={}_{-A_k}\big(\mathcal{M}\backslash(E_{k-1}-N_k)/N_k\big).\]
{\rm Case 2.} When $e_k\notin N_k$ and ${}_{-e_k}\mathcal{M}_{k-1}'$ is not acyclic,  take
\[
N_{k-1}=N_k\quad\And\quad A_{k-1}=A_k.
\]
Clearly, we have $N_{k-1}\subseteq E_{k-1}$, $A_{k-1}\subseteq E-E_{k-1}$. Applying similar arguments as Case 1, we can obtain that $N_{k-1}\in\NBC(\underline{\mathcal{M}})$ and  $\underline{\mathcal{M}_{k-1}}$ is loopless.  Note  $\mathcal{M}_{k-1}:={}_{-A_{k-1}}\b(\mathcal{M}\backslash N_{k-1}^c/N_{k-1}\b)$. We have
\[
\mathcal{M}_{k-1}=\mathcal{M}_{k-1}'\quad\And\quad
\mathcal{M}_k=\mathcal{M}_{k-1}\backslash\{e_k\}={}_{-e_k}\mathcal{M}_{k-1}\backslash\{e_k\}.
\]
Immediately, we have  ${}_{-e_k}\mathcal{M}_{k-1}$ is not acyclic. Let $Y=(\underline{Y},\emptyset)$ be a positive circuit of ${}_{-e_k}\mathcal{M}_{k-1}$. The assumption that $\mathcal{M}_k={}_{-e_k}\mathcal{M}_{k-1}\backslash\{e_k\}$ is acyclic implies $e_k\in\underline{Y}$. Next we show that $\mathcal{M}_{k-1}$ is acyclic by contradiction. Suppose $Z=(\underline{Z},\emptyset)$ is a positive circuit of $\mathcal{M}_{k-1}$. Again we have $e_k\in\underline{Z}$ since $\mathcal{M}_k=\mathcal{M}_{k-1}\backslash\{e_k\}$ is acyclic. Moreover, $\underline{Z}-\{e_k\}\ne \emptyset$ since $\underline{\mathcal{M}_{k-1}}$ is loopless.  Let   $Y_1={}_{-e_k}Z=(Y_1^+, e_k)$. Now we have two signed circuits $Y$ and $Y_1$ of ${}_{-e_k}\mathcal{M}_{k-1}$ satisfying $Y\ne -Y_1$ and $e_k\in Y^+\cap Y_1^-$. The axiom $(\cal{C}4)$ yields to a signed circuit $Y_2$ with $Y_2^-\subseteq Y^-\cup Y_1^--\{e_k\}=\emptyset$, i.e., indeed a positive circuit of ${}_{-e_k}\mathcal{M}_{k-1}$. Since $e_k\notin\underline{Y_2}$ and $\mathcal{M}_k={}_{-e_k}\mathcal{M}_{k-1}\backslash\{e_k\}$, $Y_2$ is also a positive circuit of $\mathcal{M}_k$, contradicting that $\mathcal{M}_k$ is acyclic. So we have shown that $(N_{k-1},A_{k-1})\in \mathscr{N}_{k-1}$ and ${}_{-e_k}\mathcal{M}_{k-1}$ is not acyclic.  From the definition of $\psi_k$, we have $\psi_k(N_{k-1},A_{k-1})=(N_k,A_k)$.
\vspace{3mm}\newline
{\rm Case 3.} When $e_k\notin N_k$ and ${}_{-e_k}\mathcal{M}_{k-1}'$ is acyclic,  take
\[
N_{k-1}=N_k\quad\And\quad A_{k-1}=A_k\cup\{e_k\}.
\]
Immediately, we have $N_{k-1}\subseteq E_{k-1}$, $A_{k-1}\subseteq E-E_{k-1}$,  $N_{k-1}\in\NBC(\underline{\mathcal{M}})$, and $\mathcal{M}_{k-1}={}_{-e_k}\mathcal{M}_{k-1}'$, which implies $(N_{k-1},A_{k-1})\in \mathscr{N}_{k-1}$ and $\psi_k(N_{k-1},A_{k-1})=(N_k,A_k)$.
\end{proof}
\begin{remark}\label{inverse}
The arguments of \autoref{lemma10} provide a direct way to obtain the inverse map of $\psi_k$. More precisely, the inverse $\psi_k^{-1}:\mathscr{N}_k\to\mathscr{N}_{k-1}$ is defined as follows. For any $(N_k,A_k)\in \mathscr{N}_k$, let \[\mathcal{M}_{k-1}'={}_{-A_k}\big(\mathcal{M}\backslash(E_{k-1}-N_k)/N_k\big)\]
and define
\[
\psi_k^{-1}(N_k,A_k)=
\begin{cases}
\b(N_k-\{e_k\},A_k\b), & \mbox{if\;} e_k\in N_k;\\
\b(N_k,A_k\b), & \mbox{if\;} e_k\notin N_k\mbox{\;and\;}{}_{-e_k}\mathcal{M}_{k-1}'\mbox{\;is not acyclic}; \\
\b(N_k,A_k\cup\{e_k\}\b), & \mbox{if\;} e_k\notin N_k\mbox{\;and\;}{}_{-e_k}\mathcal{M}_{k-1}'\mbox{\;is acyclic}.
\end{cases}
\]
\end{remark}
\section{Bijection from regions to affine NBC subsets}\label{sec3}
In this section, the bijection \eqref{psi_k} for oriented matroids will be specialized for real hyperplane arrangements to establish a bijection between regions and NBC subsets.
\subsection{Bijection for real linear arrangements}\label{sec3.1}
In order to be compatible with matroid notations, in this section we make the abuse of the notation $\cal{A}$ to denote a multi-arrangement, i.e., a multi-set of hyperplanes.  Let $\cal{A}$ be a linear multi-arrangement in $\mathbb{R}^n$. It is well known that there is a natural way to define an oriented matroid $\mathcal{M}(\mathcal{A})$ such that all regions of $\cal{A}$ are one-to-one corresponding to acyclic reorientations of $\mathcal{M}(\mathcal{A})$. More precisely,  given a finite set $E$ and for each $e\in E$,  take $\bm\alpha_e\in \mathbb{R}^n$ and define
$H_e:\bm\alpha_e\cdot\bm x=0$. This defines a linear multi-arrangement $\cal{A}=\{H_e\mid e\in E\}$. For each hyperplane $H_e:\bm\alpha_e\cdot{\bm x}=0$ in  $\mathbb{R}^n$, the positive and negative sides of $H_e$ can be defined as
\[  H_e^+:\bm\alpha_e\cdot{\bm x}>0 \quad\And\quad H_e^-:\bm\alpha_e\cdot{\bm x}<0 \]
respectively. Recall from \autoref{sec1} that if  $\mathcal{A}_{\sst S}:=\{H_e:e\in S\}$, for some subset $S\subseteq E$, is a circuit of $\mathcal{A}$, there is a linear relation $\sum_{e\in S}\lambda_e\bm\alpha_e=0$ with $\lambda_e\ne 0$. Associated with the circuit $\mathcal{A}_{\sst S}$, we have a signed set $X_{\sst S}=(X_{\sst S}^+,X_{\sst S}^-)$ satisfying
\[
X_{\sst S}^+=\b\{e\in S:\lambda_e>0\b\}\quad \And \quad X_{\sst S}^-=\b\{e\in S:\lambda_e<0\b\}.
\]
Then we define the oriented matroid $\mathcal{M}(\mathcal{A})=(E, \cal{C})$, where $\mathcal{C}$ is the collection of signed sets $X_{\sst S}$ for those $S\subseteq E$ so that $\mathcal{A}_{\sst S}$ is a circuit of $\cal{A}$. The deletion $\mathcal{A}\backslash H_e$ is a multi-arrangement in $\mathbb{R}^n$ by deleting $H_e$ from $\cal{A}$ and the contraction $\mathcal{A}/H_e$ is a multi-arrangement in $H_e$ consisting of  hyperplanes $H\cap H_e$ for all $H\in \mathcal{A}\backslash H_e$. With these definitions, it is easily seen that
\[
\mathcal{M}(\mathcal{A}/H_e)=\mathcal{M}(\mathcal{A})/\{e\}\quad\And\quad \mathcal{M}(\mathcal{A}\backslash H_e)=\mathcal{M}(\mathcal{A})\backslash\{e\},
\]
where the positive (negative) sides of $\mathcal{A}\backslash H_e$  and $\mathcal{A}/H_e$ are inherited from $\mathcal{A}$. More generally, for any two disjoint subsets $S$ and  $T$ of $E$, we have
\[
\mathcal{M}(\mathcal{A}\backslash \cal{A}_{\sst S}/\cal{A}_{\sst T})=\mathcal{M}(\mathcal{A})\backslash S/T.
\]
If $\cal{A}$ and $\mathcal{M}(\mathcal{A})$ share the same total order, it is clear that for $S\subseteq E$, we have
\begin{equation}\label{two-NBC}
S\in  \NBC\b(\udl{\cal{M}(\cal{A})}\b) \quad\Leftrightarrow\quad \cal{A}_{\sst S}\in \NBC(\cal{A}).
\end{equation}
Note that with the choices of normal vectors $\pm{\bm \alpha}_e$, we have distinguished oriented matroids. In fact, for $A\subseteq E$ and each $e\in A$, choosing $-{\bm \alpha}_e$ as the normal vector of $H_e$, then we obtain the oriented matroid ${}_{-A}\mathcal{M}(\mathcal{A})$, a reorientation of $\mathcal{M}(\mathcal{A})$ by $A$.  Recall that a region $\Delta\in \mathcal{R}(\mathcal{A})$ is a connected component of $\mathbb{R}^n-\bigcup_{H\in \mathcal{A}}H$, which implies all points of $\Delta$ lie in the same side of $H_e$ for each $H_e\in \cal{A}$. Let
$A(\Delta)$ be a subset of $E$ defined by
\begin{equation*}
A(\Delta)=\{e\in E\mid \Delta\subseteq H_e^-\}.
\end{equation*}
It is clear from \cite{Bjorner1999} that the reorientation
${}_{-A(\Delta)}\mathcal{M}(\mathcal{A})$ is acyclic. Moreover, the map
\begin{equation}\label{region-acyclic}
A:\cal{R}(\cal{A})\to A(\cal{M}(\cal{A})), \quad \Delta\mapsto A(\Delta)
\end{equation}
is a bijection. In \autoref{Sec2.2}, we have established a bijection from $A(\cal{M}(\cal{A}))$ to $\NBC(\cal{M}(\cal{A}))$. With the obvious correspondence between $\NBC(\cal{M}(\cal{A}))$ and $\NBC(\cal{A})$,
the composition of these two bijections will lead to a bijection from $\cal{R}(\cal{A})$ to $\NBC(\cal{A})$, which is our main result in this section and will be formulated precisely as follows.

Suppose $E=\{e_1, \ldots, e_m\}$ is totally ordered by $e_1\prec\cdots\prec e_m$, and $\cal{A}=\b\{H_{e_i}:\bm\alpha_{e_i}\cdot{\bm x}=0,\; e_i\in E\b\}$. Let $\mathcal{A}_k=\b\{H_{e_i}\mid i\in[k]\b\}$ and $\mathcal{A}_k^c:=\mathcal{A}-\mathcal{A}_k$. For some $\mathcal{B}\subseteq\mathcal{A}$, $\mathcal{A}_k^c/\mathcal{B}$ is a hyperplane arrangement in the affine subspace $\cap \mathcal{B}=\cap_{H\in \cal{B}}H$ defined by
\[\mathcal{A}_k^c/\mathcal{B}:=\b\{(\cap\mathcal{B})\cap H_i\mid H_i\in\mathcal{A}_k^c,\;(\cap\mathcal{B})\cap H_i\ne\cap\mathcal{B} \And\emptyset\b\}.\]
For $k=0,\ldots,m$, let $\mathscr{B}_k$ be the collection of pairings of NBC subsets and regions of $\cal{A}$ defined by
\[
\mathscr{B}_k=\b\{(\mathcal{B}_k,\Delta_k)\mid \mathcal{B}_k\subseteq\mathcal{A}_k,\,\mathcal{B}_k\in\NBC(\mathcal{A})\And \Delta_k\in\mathcal{R}(\mathcal{A}_k^c/\mathcal{B}_k)\b\}.
\]
In particular, $\mathscr{B}_0=\b\{(\emptyset,\Delta_0)\mid \Delta_0\in\mathcal{R}(\mathcal{A})\b\}$ and $\mathscr{B}_m=\b\{(\mathcal{B}_m,\cap\mathcal{B}_m)\mid\mathcal{B}_m\in\NBC(\mathcal{A})\b\}$.  The bijection $\phi_k:\mathscr{B}_{k-1}\to\mathscr{B}_k$ is defined as follows. For any $(\mathcal{B}_{k-1},\Delta_{k-1})\in \mathscr{B}_{k-1}$,  let
\begin{eqnarray}\label{phi_k}
\phi_k(\mathcal{B}_{k-1},\Delta_{k-1})=
\begin{cases}
(\mathcal{B}_{k-1}\cup \{H_{e_k}\}, \Delta_k'\cap H_{e_k}) & \mbox{if\;}\Delta_{k-1}\subseteq H_{e_k}^+\And\Delta_{k-1}\ne\Delta_k'; \\
(\mathcal{B}_{k-1},\Delta_{k-1}) & \mbox{if\;}\Delta_{k-1}\subseteq H_{e_k}^+\And \Delta_{k-1}=\Delta_k';\\
(\mathcal{B}_{k-1},\Delta_k') & \mbox{if\;}\Delta_{k-1}\subseteq H_{e_k}^-,
\end{cases}
\end{eqnarray}
where $\Delta_k'$ is the unique region in $\mathcal{R}(\mathcal{A}_k^c/\mathcal{B}_{k-1})$ containing $\Delta_{k-1}$, since  $\Delta_{k-1}\in \mathcal{R}(\mathcal{A}_{k-1}^c/\mathcal{B}_{k-1})$ and \[\mathcal{A}_k^c/\mathcal{B}_{k-1}=(\mathcal{A}_{k-1}^c\backslash H_{e_k})/\mathcal{B}_{k-1}.\]
Likewise, the composition
\[\phi:=\phi_m\circ\phi_{m-1}\circ\cdots\circ\phi_1\]
is a bijection from $\mathscr{B}_0$ to $\mathscr{B}_m$, which clearly induces a bijection from $\cal{R}(\mathcal{A})$ to $\NBC(\mathcal{A})$.

\subsection{Verification on the bijection $\phi_k$}
Let $\cal{A}=\b\{H_{e_i}:\bm\alpha_{e_i}\cdot{\bm x}=0,\; e_i\in E\b\}$ be a real hyperplane arrangement and $\cal{M}(\cal{A})$ the associated oriented matroid over the ordered set $\{e_1\prec\cdots\prec e_m\}$. Let the set $\mathscr{N}_k$ and the bijection $\psi_k:\mathscr{N}_{k-1}\to \mathscr{N}_k$ be defined for $\cal{M}(\cal{A})$  as \eqref{psi_k}, and let the set $\mathscr{B}_k$ and the map $\phi_k:\mathscr{B}_{k-1}\to \mathscr{B}_k$ defined for $\cal{A}$ as \eqref{phi_k}. It is clear that the correspondences \eqref{two-NBC} and \eqref{region-acyclic} will lead to a natural bijection $\tau_k: \mathscr{B}_k\to \mathscr{N}_k$, where $\tau_k(\cal{B}_k,\Delta_k)=(N_k, A_k)$  is defined by
\[
N_k=\{e\mid H_e\in\mathcal{B}_k\}\quad\And\quad A_k=\{e\in E-E_k\mid \Delta_k\subseteq H_e^-\}.
\]
Under this bijection, the map $\phi_k$ of \eqref{phi_k} will be identified with the bijection $\psi_k$ of \eqref{psi_k} case by case in the sense $\tau_k\circ\phi_k=\psi_k\circ\tau_{k-1}$, i.e., the diagram below commutes. Thus  $\phi_k$ is bijective.
\begin{figure}[H]
\centering
\begin{tikzpicture}
\draw[->] (0.5,0)--(2,0) node[right]{$\mathscr{N}_k$};
\draw[<-] (0,0.25)--(0,1.5);
\draw[->] (0.5,1.75)--(2,1.75) node[right]{$\mathscr{B}_k$};
\draw[->] (2.35,1.5)--(2.35,0.25);
\draw (0,0) node {$\mathscr{N}_{k-1}$};
\draw (0,1.75) node {$\mathscr{B}_{k-1}$};
\draw (0,1)  node[left] {$\tau_{k-1}$};
\draw (2.25,1)  node[right]  {$\tau_k$};
\draw (1.25,0)  node[above] {$\psi_k$};
\draw (1.25,1.75) node[above] {$\phi_k$};
\end{tikzpicture}
\end{figure}

Next we will explain the detailed identification of $\phi_k$ with $\psi_k$. Since the arguments for other two cases are similar, we will only deal with the first case in the definitions  $\eqref{psi_k}$ and $ \eqref{phi_k}$. Namely, if $(\mathcal{B}_{k-1},\Delta_{k-1})\in \mathscr{B}_{k-1}$ and  $\Delta_k'$ is the unique region in $\mathcal{R}(\mathcal{A}_k^c/\mathcal{B}_{k-1})$ with $\Delta_{k-1}\subseteq \Delta_{k}'$, we will show that the following diagram holds for $\Delta_{k-1}\subseteq H_{e_k}^+$ and $\Delta_{k-1}\neq\Delta_k'$.
\begin{figure}[H]
\centering
\begin{tikzpicture}
\draw[|->] (1.5,0)--(3.5,0) node[right]{$(N_{k-1}\cup\{e_k\},A_{k-1})$};
\draw[<-|] (0.15,0.25)--(0.15,1.5);
\draw[|->] (1.5,1.75)--(3.5,1.75) node[right]{$(\mathcal{B}_{k-1}\cup\{H_{e_k}\},\Delta_k'\cap H_{e_k})$};
\draw[<-|] (5.5,0.25)--(5.5,1.5);
\draw (0.15,0) node {$(N_{k-1},A_{k-1})$};
\draw (0.15,1.75) node {$(\mathcal{B}_{k-1},\Delta_{k-1})$};
\draw (0.15,1)  node[left] {$\tau_{k-1}$};
\draw (5.5,1)  node[right]  {$\tau_{k}$};
\draw (2.5,0)  node[above] {$\psi_k$};
\draw (2.5,1.75) node[above] {$\phi_k$};
\end{tikzpicture}
\end{figure}

It will be enough to prove that if $\Delta_{k-1}\subseteq H_{e_k}^+$ and $\Delta_{k-1}\neq\Delta_k'$, then $(N_{k-1},A_{k-1})$ meets the first case in \eqref{psi_k} and $\tau_k(\cal{B}_{k-1}\cup \{H_{e_k}\},\Delta_k'\cap H_{e_k})=(N_{k-1}\cup\{e_k\},A_{k-1})$, i.e., in this case we have
\begin{itemize}
\item[(1)] $ A_{k-1}=\{e\in E-E_k\mid \Delta_k'\cap H_{e_k}\subseteq H_e^-\}$;
\item[(2)] $e_k\notin A_{k-1}$ and ${}_{-e_k}\mathcal{M}(\cal{A})_{k-1}$ is acyclic.
\end{itemize}
Notice that the multi-arrangement $\mathcal{A}_{k-1}^c/\mathcal{B}_{k-1}$ is obtained from the multi-arrangement $\mathcal{A}_k^c/\mathcal{B}_{k-1}$ by adding $(\cap \mathcal{B}_{k-1})\cap H_{e_k}$. From the assumptions $\Delta_{k-1}\in\cal{R}(\mathcal{A}_{k-1}^c/\mathcal{B}_{k-1})$, $\Delta_k'\in \cal{R}(\mathcal{A}_k^c/\mathcal{B}_{k-1})$, $\Delta_{k-1}\subsetneqq\Delta_k'$, and $\Delta_{k-1}\subseteq H_{e_k}^+$, we can see that the region $\Delta_k'$ is separated by $H_{e_k}$ into two regions of $\mathcal{A}_{k-1}^c/\mathcal{B}_{k-1}$ and one of which is $\Delta_{k-1}=\Delta_k'\cap H_{e_k}^+$. It means that for $e\in E-E_k$ (obviously, $e\ne e_k$), the followings are equivalent
\[
\Delta_{k-1}\subseteq H_{e}^-\quad\Leftrightarrow\quad\Delta_k'\subseteq H_{e}^-\quad\Leftrightarrow\quad \Delta_k'\cap H_{e_k}\subseteq H_{e}^-.
\]
Again by $\Delta_{k-1}\subseteq H_{e_k}^+$, we have $e_k\notin A_{k-1}$ and
\[
A_{k-1}:=\{e\in E-E_{k-1}\mid \Delta_{k-1}\subseteq H_e^-\}=\{e\in E-E_k\mid \Delta_k'\cap H_{e_k}\subseteq H_e^-\},
\]
which proves (1) and the first part of (2). It remains to show that the oriented matroid \[{}_{-e_k}\mathcal{M}(\cal{A})_{k-1}={}_{-A_{k-1}\sqcup\{e_k\}}\b(\mathcal{M}(\cal{A})\backslash N_{k-1}^c/N_{k-1}\b)\]
is acyclic. It is clear from the definition of $\cal{M}(\cal{A})$ that
\[\mathcal{M}(\cal{A})\backslash N_{k-1}^c/N_{k-1}=\cal{M}(\cal{A}_{k-1}^c/\cal{B}_{k-1}).\]
Note that both $\Delta_k'\cap H_{e_k}^+$ and $\Delta_k'\cap H_{e_k}^-$ are regions of $\cal{A}_{k-1}^c/\cal{B}_{k-1}$. Since $A(\Delta_{k-1})=A(\Delta_k'\cap H_{e_k}^+)=A_{k-1}$, we have $A(\Delta_k'\cap H_{e_k}^-)=\{e_k\}\cup A_{k-1}$, which implies ${}_{-e_k}\mathcal{M}(\cal{A})_{k-1}$ is acyclic by the definition of \eqref{region-acyclic}.

\subsection{Extension to real hyperplane arrangements}
In this subsection, the bijection in \autoref{sec3.1} will be extended to real hyperplane arrangements via the coning operation.  Let $\mathcal{A}=\b\{H_i:\bm\alpha_i\cdot\bm x=b_i\mid i\in [m]\b\}$ be a real hyperplane arrangement in $\mathbb{R}^n$. Then its {\em coning} $\c \mathcal{A}$ is a linear arrangement in $\mathbb{R}^{n+1}$ defined by
\[
\c \mathcal{A}=\b\{\c H_i:\bm\alpha_i\cdot\bm x=b_i x_{n+1}\mid i\in [m]\b\}\sqcup\{K_0:x_{n+1}=0\}.
\]
Let $K_1:x_{n+1}=1$. It is clear that
\[
\mathcal{A}=\b\{\c H_i\cap K_1\mid i\in [m]\b\},
\]
i.e., the hyperplane arrangement $\mathcal{A}$ is nothing but the restriction of $\c\mathcal{A}$ on $K_1$. It follows that
\[
\mathcal{R}(\mathcal{A})=\{\Delta\cap K_1\mid \Delta\in\mathcal{R}(\c\mathcal{A})\And \Delta\cap K_1\ne\emptyset\}.
\]
Then the bijection of \autoref{sec3.1} can be applied to the linear arrangement $\c\mathcal{A}$ to obtain the bijection for hyperplane arrangement $\cal{A}$. Suppose $\c\mathcal{A}$ is totally ordered by $\c H_{1}\prec\c H_{2}\prec\cdots\prec\c H_{m}\prec K_0$ and $\mathcal{A}$ is ordered by $H_{1}\prec H_{2}\prec\cdots\prec H_{m}$. Note that for any subset $S\subseteq [m]$, $\mathcal{A}_{\sst S}$ is an affine NBC subset of $\mathcal{A}$ if and only if both  $(\c\mathcal{A})_{\sst S}$ and $(\c\mathcal{A})_{\sst S}\cup\{K_0\}$  are the NBC subsets of $\c\mathcal{A}$. More precisely, the bijection from regions of $\cal{A}$ to its affine subsets is obtained from the bijection of \autoref{sec3.1} as follows. For each $\Delta_1\in \cal{R}(\cal{A})$, there is a unique $\Delta\in \cal{R}(\c\cal{A})$ with $\Delta_1=\Delta\cap K_1$, which is by the bijection of \autoref{sec3.1} sent to an NBC subset of $\c\cal{A}$, denoted $\c\cal{B}$. Then the set $\{H_i\in \cal{A}\mid \c H_i\in \c\cal{B}\}$ is automatically an affine NBC subset of $\cal{A}$, which establishes the bijection. To state the bijection explicitly, we need to introduce the set $\mathscr{B}_k$ and the bijection $\phi_k:\mathscr{B}_{k-1}\to \mathscr{B}_k$ for $\cal{A}$, which can be defined exactly in the same manner as \eqref{phi_k}. Indeed, all notations and definitions for linear arrangements in \eqref{phi_k} are also available for hyperplane arrangements. The following remark is the inverse of $\phi_k$ in \eqref{phi_k} for hyperplane arrangements $\cal{A}$, whose composition gives a bijection from affine NBC subsets of $\cal{A}$ to its regions.
\begin{remark}\label{inverse1} Let $\mathcal{A}=\b\{H_i:\bm\alpha_i\cdot\bm x=b_i\mid i\in [m]\b\}$ be a hyperplane arrangement in $\mathbb{R}^n$ with the total order $H_1\prec\cdots\prec H_m$. Recall from \autoref{sec3.1} that
$\mathscr{B}_k$ is the collection of pairings of NBC subsets and regions of $\cal{A}$ defined by
\[
\mathscr{B}_k=\b\{(\mathcal{B}_k,\Delta_k)\mid \mathcal{B}_k\subseteq\mathcal{A}_k,\,\mathcal{B}_k\in\NBC(\mathcal{A})\And \Delta_k\in\mathcal{R}(\mathcal{A}_k^c/\mathcal{B}_k)\b\}.
\]
For any paring $(\mathcal{B}_k,\Delta_k)\in\mathscr{B}_k$, let $\Delta_k'$ be the unique region in $\mathcal{A}_k^c/(\mathcal{B}_k-\{H_{k}\})$ with $\Delta_k\subseteq \Delta_k'$.
The inverse map $\phi_k^{-1}:\mathscr{B}_k\to\mathscr{B}_{k-1}$ is defined to be
\begin{eqnarray*}\label{phi_k^{-1}}
\phi_k^{-1}(\mathcal{B}_k,\Delta_k)=
\begin{cases}
(\mathcal{B}_k-\{H_{k}\},\Delta_k'\cap H_{k}^+) & \mbox{if\;}\Delta_k\subseteq H_{k};\\
(\mathcal{B}_k,\Delta_k) & \mbox{if\;} \Delta_k\cap H_{k}=\emptyset;\\
(\mathcal{B}_k,\Delta_k'\cap H_{k}^-) & \mbox{if\;} \Delta_k\cap H_{k}\ne\emptyset,\; \Delta_k\nsubseteq H_{k}.
\end{cases}
\end{eqnarray*}
\end{remark}

One should observe that the bijections $\phi_k$ directly give rise to an extended version \autoref{Regions} of \autoref{Region-NBC}, which was first proved in \cite{Fu-Wang2024} by induction on the number of hyperplanes. In particular,  this coincides with \autoref{Region-NBC} at $k=m$. 
\begin{theorem}[\cite{Fu-Wang2024}, Theorem 4.2]\label{Regions}
Let $\mathcal{A}=\{H_1,H_2,\ldots,H_m\}$ be a hyperplane arrangement in a real vector space. Then for each $k=0,1,\ldots,m$,
\[
|\mathcal{R}(\mathcal{A})|=\sum_{\mathcal{B}\in\NBC(\mathcal{A}),\,\mathcal{B}\subseteq\mathcal{A}_k}
|\mathcal{R}(\mathcal{A}_k^c/\mathcal{B})|=|\NBC(\mathcal{A})|.
\]
\end{theorem}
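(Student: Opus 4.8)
The plan is to obtain \autoref{Regions} directly from the chain of bijections $\phi_1,\ldots,\phi_m$ already set up in \autoref{sec3.1} and \autoref{sec3}, by counting the intermediate sets $\mathscr{B}_0,\ldots,\mathscr{B}_m$ in two ways. First I would compute $|\mathscr{B}_k|$ from its definition. Partitioning $\mathscr{B}_k$ by its first coordinate $\mathcal{B}_k$ — which ranges over the NBC subsets of $\mathcal{A}$ contained in $\mathcal{A}_k$ — and noting that, for such a fixed $\mathcal{B}_k$, the admissible second coordinates are exactly the regions of $\mathcal{A}_k^c/\mathcal{B}_k$, one gets
\[
|\mathscr{B}_k|=\sum_{\mathcal{B}\in\NBC(\mathcal{A}),\,\mathcal{B}\subseteq\mathcal{A}_k}|\mathcal{R}(\mathcal{A}_k^c/\mathcal{B})|.
\]
At the two ends, $\mathscr{B}_0=\{(\emptyset,\Delta_0)\mid\Delta_0\in\mathcal{R}(\mathcal{A})\}$ and $\mathscr{B}_m=\{(\mathcal{B}_m,\cap\mathcal{B}_m)\mid\mathcal{B}_m\in\NBC(\mathcal{A})\}$, so $|\mathscr{B}_0|=|\mathcal{R}(\mathcal{A})|$ and $|\mathscr{B}_m|=|\NBC(\mathcal{A})|$. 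Since each $\phi_k\colon\mathscr{B}_{k-1}\to\mathscr{B}_k$ is a bijection, the composites $\phi_k\circ\cdots\circ\phi_1$ and $\phi_m\circ\cdots\circ\phi_{k+1}$ identify $\mathscr{B}_0$, $\mathscr{B}_k$ and $\mathscr{B}_m$ as equinumerous sets, and the displayed double identity falls out; the case $k=m$ is \autoref{Region-NBC}.

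Thus the only real ingredient is that each $\phi_k$ is a bijection, and this is precisely what \autoref{sec3} establishes. For a linear multi-arrangement $\mathcal{A}$ I would recall the argument: the correspondences \eqref{two-NBC} and \eqref{region-acyclic} give bijections $\tau_k\colon\mathscr{B}_k\to\mathscr{N}_k$, and the square $\tau_k\circ\phi_k=\psi_k\circ\tau_{k-1}$ commutes — one checks this for the three branches of \eqref{phi_k} against the three branches of \eqref{psi_k}, the first being done in the text and the other two being wholly parallel, the only bookkeeping being the unique region $\Delta_k'\in\mathcal{R}(\mathcal{A}_k^c/\mathcal{B}_{k-1})$ containing $\Delta_{k-1}$ and whether $H_{e_k}$ splits it. As $\psi_k$ is a bijection by \autoref{lemma8}, \autoref{lemma9} and \autoref{lemma10}, and $\tau_{k-1},\tau_k$ are bijections, so is $\phi_k=\tau_k^{-1}\circ\psi_k\circ\tau_{k-1}$. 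For a general real hyperplane arrangement $\mathcal{A}$ in $\mathbb{R}^n$ I would reduce to the linear case via the coning $\c\mathcal{A}$ in $\mathbb{R}^{n+1}$: running the linear bijection on the first $m$ of its steps under $\c H_1\prec\cdots\prec\c H_m\prec K_0$ and intersecting with $K_1$ transports $\mathscr{B}_k$ and $\phi_k$ for $\mathcal{A}$ to the corresponding data for $\c\mathcal{A}$; equivalently, one verifies by hand that $\phi_k^{-1}$ of \autoref{inverse1} is a two-sided inverse of $\phi_k$.

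The heavy lifting — bijectivity of $\psi_k$, and hence of $\phi_k$, together with \autoref{acyclic-contraction} — is already behind us, so the step that still needs care is keeping the coning dictionary watertight: one must check that $\mathcal{B}\subseteq\mathcal{A}$ is an affine NBC subset precisely when the lift $\c\mathcal{B}=\{\c H_i\mid H_i\in\mathcal{B}\}$ and $\c\mathcal{B}\cup\{K_0\}$ are both NBC subsets of $\c\mathcal{A}$; that this in particular forces $\cap\mathcal{B}\ne\emptyset$ (otherwise $\cap\c\mathcal{B}\subseteq K_0$, giving a circuit through the maximal hyperplane $K_0$ and hence a broken circuit); and that the regions of $\mathcal{A}_k^c/\mathcal{B}$ biject with those regions of $(\c\mathcal{A})_k^c/\c\mathcal{B}$ that meet $K_1$. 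Two degenerate points deserve a sentence: multi-arrangements may repeat hyperplanes, and when $\mathcal{A}_k^c/\mathcal{B}$ carries no hyperplanes one has $|\mathcal{R}(\mathcal{A}_k^c/\mathcal{B})|=1$, so $\mathcal{B}$ contributes a single pair $(\mathcal{B},\cap\mathcal{B})$. Granting these, \autoref{Regions} is immediate.
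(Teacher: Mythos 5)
Your proposal is correct and follows essentially the same route as the paper, which obtains \autoref{Regions} as an immediate corollary of the chain of bijections $\phi_k:\mathscr{B}_{k-1}\to\mathscr{B}_k$: counting $|\mathscr{B}_k|$ by fibering over the first coordinate gives the middle sum, and the endpoints $\mathscr{B}_0$, $\mathscr{B}_m$ give $|\mathcal{R}(\mathcal{A})|$ and $|\NBC(\mathcal{A})|$. Your extra care about the coning dictionary (affine NBC versus NBC of $\c\mathcal{A}$, and regions meeting $K_1$) is a sensible elaboration of details the paper leaves implicit, but it is not a different argument.
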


To end this section, we will give a small example on the bijection from regions to affine NBC subsets.
\begin{example}{\rm
Let
\[\mathcal{A}=\{H_1:y=0,\;H_2:x-y=0,\;H_3:x=0,\;H_4:x+y=1\}\]
be a hyperplane arrangement in $\mathbb{R}^2$. \autoref{figure1}  describes correspondences between $\mathcal{R}(\mathcal{A})$ and $\NBC(\mathcal{A})$, i.e., for each region of $\cal{A}$, the corresponding affine NBC subset is given inside the region. E.g., for the yellow region $\Delta_0$ in \autoref{figure1} defined by $\Delta_0=H_1^-\cap H_2^+\cap H_3^+\cap H_4^+$, the corresponding affine NBC subset is $\{H_2,H_4\}$. The complete algorithm from $\Delta_0$ to $\{H_2,H_4\}$ is the following.
\begin{figure}[H]
\centering
\begin{tikzpicture}[scale=1,line width=1pt]
\fill[yellow] (3,0)--(5,0)--(5,-2);
\draw (-2,0)--(5,0)node[right]{$H_1:y=0$};
\draw (0,-2)--(0,5)node[above]{$H_3:x=0$};
\draw (-2,-2)--(5,5)node[right]{$H_2:x-y=0$};
\draw (5,-2)--(-2,5)node[left]{$H_4:x+y=1$};
\node at (1.5,.5){$\{H_1,H_3\}$};
\node at (.5,1.5){$\{H_3\}$};
\node at (1.5,3){$\{H_3,H_4\}$};
\node at (3,1.5){$\{H_1,H_4\}$};
\node at (-.5,4.25){$\{H_4\}$};
\node at (-.5,1.5){$\{H_1\}$};
\node at (-.6,-.25){$\emptyset$};
\node at (-.5,-1.25){$\{H_2\}$};
\node at (1.5,-1.25){$\{H_2,H_3\}$};
\node at (4.35,-.5){$\{H_2,H_4\}$};
\node at (0.25,3){$P_2$};
\node at (0.35,4.25){$Q_2$};
\node at (1.8,1.5){$P_1$};
\node at (3.3,3){$Q_1$};
\node at (3,0.3){$P_0$};
\end{tikzpicture}
\hspace{2cm}\caption{The bijection: \;$\mathcal{R}(\mathcal{A})\to \NBC(\mathcal{A})$}
\label{figure1}
\end{figure}
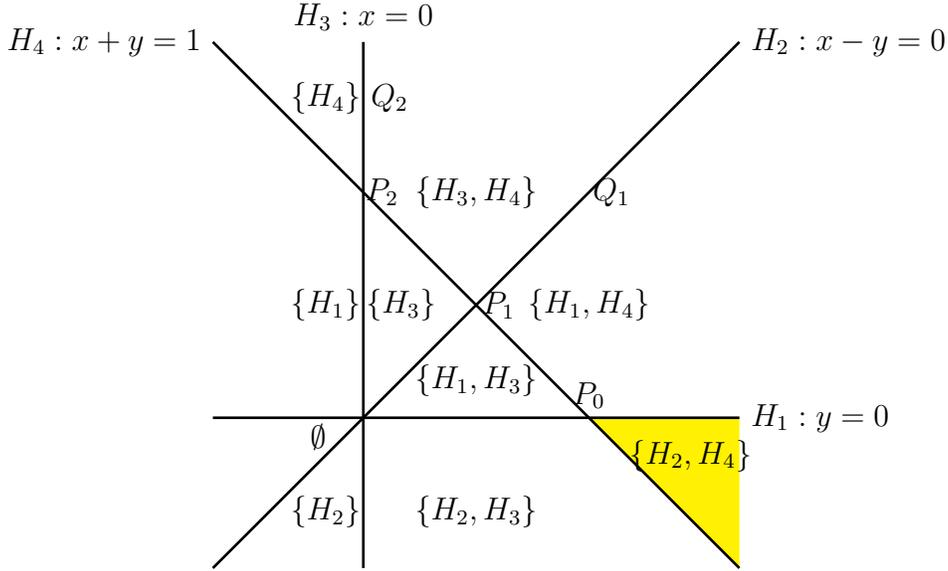

\begin{itemize}
   \item [Step 1.] For $k=1$, we have $\cal{B}_{k-1}=\emptyset$ and $\cal{A}_k^c=\{H_2,H_3,H_4\}$. So $\cal{A}_{k}^c/\cal{B}_{k-1}=\cal{A}\backslash \{H_1\}$ in $\mathbb{R}^2$. Start with $(\cal{B}_0,\Delta_0)=(\emptyset,\Delta_0)$. The unique region $\Delta_1'$ of $\cal{A}_{1}^c/\cal{B}_{0}$ containing $\Delta_0$ is the angular region $\angle Q_1P_1P_0$. Since $\Delta_0\subseteq H_1^-$, then $\phi_1(\cal{B}_0,\Delta_0)=(\emptyset,\Delta_1')$ by the third case of the definition \eqref{phi_k}.
   \item [Step 2.] For $k=2$, we have $\mathcal{B}_{k-1}=\emptyset$ and $\mathcal{A}_k^c=\{H_3,H_4\}$. So  $\cal{A}_{k}^c/\cal{B}_{k-1}=\cal{A}\backslash \{H_1,H_2\}$ in $\mathbb{R}^2$. Let $(\cal{B}_1,\Delta_1)=(\emptyset,\Delta_1')$.  The unique region $\Delta_2'$ of $\cal{A}_{2}^c/\cal{B}_{1}$ containing $\Delta_1$ is the angular region $\angle Q_2P_2P_0$. Obviously, $\Delta_1\subseteq H_2^+$ and $\Delta_1\ne\Delta_2'$. Then from the first case of \eqref{phi_k}, we have $\phi_2(\cal{B}_1,\Delta_1)=(\{H_2\},\Delta_2'\cap H_2)$, where $\Delta_2'\cap H_2$ is the open half line $P_1Q_1$.
   \item [Step 3.] For $k=3$, we have $\mathcal{B}_{k-1}=\{H_2\}$ and $\mathcal{A}_k^c=\{H_4\}$. So  $\cal{A}_{k}^c/\cal{B}_{k-1}=\{H_4\cap H_2\}$ in $H_2$. Consider $(\cal{B}_2,\Delta_2)=(\{H_2\},\Delta_2'\cap H_2)$. Clearly, $\Delta_2\subseteq H_3^+$ and the unique region $\Delta_3'$ of $\cal{A}_{3}^c/\cal{B}_{2}$ containing $\Delta_2$ is $\Delta_2$ itself, i.e., $\Delta_3'=\Delta_2$. By the second case of \eqref{phi_k}, we obtain $\phi_3(\cal{B}_2,\Delta_2)=(\{H_2\},\Delta_2'\cap H_2)$.
   \item [Step 4.]  For $k=4$, we have $\mathcal{B}_{k-1}=\{H_2\}$ and $\mathcal{A}_k^c=\emptyset$. So $\mathcal{A}_k^c/\mathcal{B}_{k-1}=\emptyset$ in $H_2$. So far we have $(\cal{B}_3,\Delta_3)=(\{H_2\},\Delta_2'\cap H_2)$. The unique region $\Delta_4'$ of $\cal{A}_{4}^c/\cal{B}_{3}$ containing $\Delta_3$ is the whole space $H_2$. Clearly, we have $\Delta_3\subseteq H_4^+$ and $\Delta_3\ne \Delta_4'$. Then from the first case of \eqref{phi_k},  we have $\phi_4(\cal{B}_3,\Delta_3)=(\{H_2,H_4\},H_2\cap H_4)$, i.e., $\cal{B}_4=\{H_2,H_4\}$.
 \end{itemize}
 }
\end{example}
\section*{Acknowledgements}
The first author is supported by National Natural Science Foundation of China under Grant No. 12301424. The last author is supported by Hunan Provincial Innovation Foundation for Postgraduate under Grant No. CX20220434.

\end{document}